\documentclass[12pt]{article}

\usepackage{amsmath, amsthm, amssymb}
\usepackage{geometry}
\usepackage{hyperref}
\usepackage{microtype}
\usepackage{tikz}
\usepackage{enumitem}
\usepackage{times}
\newtheorem{theorem}{Theorem}[section]
\newtheorem{lemma}[theorem]{Lemma}
\newtheorem{corollary}[theorem]{Corollary}

\theoremstyle{definition}
\newtheorem{definition}[theorem]{Definition}

\newtheorem{example}[theorem]{Example}

\DeclareMathOperator{\tr}{tr}
\DeclareMathOperator{\spec}{spec}
\newcommand{\TT}{\mathbb{T}}
\newcommand{\CC}{\mathbb{C}}
\newcommand{\vp}{\varphi}

\title{On Spectra of $\mathbb{T}$-Gain Digraphs}

\begin{document}

\author{Shivani Tushar Parab\footnote{shivaniparab004@gmail.com}, Tarkeshwar Singh\footnote{tksingh@goa.bits-pilani.ac.in} and Mukesh Kumar Nagar \footnote{mukesh.kr.nagar@gmail.com}\\
Department of Mathematics,\\
BITS Pilani K K Birla Goa Campus,
Goa, India.}
\date{}
\maketitle

\begin{abstract}
A \(\mathbb{T}\)-gain digraph is a directed graph with complex unit gains on its arcs, allowing for no restrictions on oppositely directed arcs. This framework unifies various graph types, such as signed graphs, mixed graphs, complex unit gain graphs, digraphs, and signed digraphs. The gain adjacency, Laplacian, and signless Laplacian matrices are generally non-Hermitian with complex spectra. We develop the spectral theory of these matrices, extending classical results from signed digraphs and complex unit-gain graphs. For the gain adjacency matrix, we establish determinant and characteristic polynomial formulas, characterize cycle balance through switching equivalence and cospectrality with the underlying digraph. We further bound the spectral radius in terms of the underlying digraph and its maximum out-degree, with equality characterized by $\mu$-balance. As a consequence, we determine the spectra of $\mathbb{T}$-gain unicyclic digraphs. 
Moreover, we 
%also obtain determinant and characteristic polynomial formulas,
characterize cycle balance and antibalance for the Laplacian and signless Laplacian matrices, respectively, through the presence of a zero eigenvalue.

\end{abstract}

\textbf{Keywords}: $\mathbb{T}$-gain digraph; adjacency and Laplacian matrices; spectral radius; $\mu$-balance. 

\textbf{AMS Subject Classification:} 05C50, 05C22, 05C20.

\section{Introduction}
Graphs are among the most fundamental structures in discrete mathematics, providing a natural model for pairwise relationships between objects. A graph $\Gamma = (V, E)$ consists of a finite set of vertices $V$ together with a set of edges $E$, where each edge is an unordered pair of distinct vertices; two vertices are said to be \emph{adjacent} if they are joined by an edge.

Spectral graph theory studies graphs using the eigenvalues and eigenvectors of matrices associated with them, such as the adjacency matrix $A(\Gamma)$ and the Laplacian matrix $L(\Gamma)$ for a graph $\Gamma$. Many structural properties of a graph are reflected in the spectra of these matrices. For example, the entries of $A(\Gamma)^k$ count walks of length $k$, while $\operatorname{tr}(A(\Gamma)^k)$ gives the number of closed walks of length $k$. The spectrum also provides information about properties such as connectivity, bipartiteness and regularity, etc. Besides its importance in graph theory, spectral graph theory has found applications in chemistry, physics and computer science. For basic terminology
and standard results, we refer to \cite{Bapat}.

Many real-world networks, such as communication, transportation and biological networks, are naturally modelled by directed graphs, where the direction of an edge represents the direction of interaction or flow. Spectral graph theory for digraphs studies such networks through the eigenvalues of matrices associated with them. A \emph{digraph} is an ordered pair $D=(V,\mathcal{A})$, where $V = \{v_1,v_2,\dots,v_n\}$ is a finite vertex set and $\mathcal{A}$ is a set of ordered pairs of distinct vertices, called \emph{arcs}. For a vertex $v\in V$, the out-degree $\deg^{+}(v)$ is the number of arcs leaving $v$, and the in-degree $\deg^{-}(v)$ is the number of arcs entering $v$ and we set $\Delta^{+} = \max_i \deg^{+}(v_i)$, $\Delta^{-} = \max_i \deg^{-}(v_i)$. A \emph{directed walk} of length $k$ is a sequence $v_{i_0} \to v_{i_1} \to \cdots \to v_{i_k}$ of arcs, and is \emph{closed} if $v_{i_k} = v_{i_0}$. A \emph{directed path} is a directed walk in which all vertices are distinct. A \emph{directed cycle} is a closed directed path. A cycle of length $2$ is a \emph{digon}. The digraph $D$ is \emph{strongly connected} if every ordered pair of vertices is joined by a directed path. Throughout this paper, all digraphs are finite and loopless, and for every ordered pair of distinct vertices $(v_i,v_j)$, there is at most one arc from $v_i$ to $v_j$. 

For an $n \times n$ matrix $M(\mathbb{C})$ over complex numbers $\mathbb{C}$, we denote $M^{\top}$ for the transpose, $M^{*} = \overline{M}^{\top}$ for the conjugate transpose, $\spec(M)$ for its spectrum (the multiset of eigenvalues), $p_M(\lambda) = \det(\lambda I - M)$ its characteristic polynomial, and $\rho(M) = \max\{\,|\lambda| : \lambda \in \spec(M)\,\}$ its spectral radius. The \emph{adjacency matrix} of a digraph $D=(V,\mathcal{A})$ is the matrix $A(D)$  in which $(i,j)$th entry  $A(D)_{ij} = 1$ if $(v_i,v_j) \in A$ and $0$ otherwise. In general, the adjacency matrix $A(D)$ is not symmetric, and therefore its eigenvalues need not be real. Consequently, the spectrum of a digraph is, in general, a multiset of complex numbers. If every arc $(v_i,v_j)$ is accompanied by its reverse arc $(v_j,v_i)$, then $A(D)$ is symmetric and coincides with the adjacency matrix of the corresponding underlying undirected graph. We denote $\rho(D)$ as the spectral radius of $A(D)$. Thus the spectral theory of digraphs extends the classical spectral theory of graphs.

A \emph{gain graph}, as introduced by Zaslavsky~\cite{Zaslavsky, Zaslavsky1998}, is built on an undirected graph $\Gamma=(V,E)$, where $V$ and $E$ denote the vertex and edge sets of $\Gamma$. To assign gains,  one first fixes an \emph{orientation} of each edge, that is, for every edge $\{u,v\} \in E$ one chooses an ordered pair, say $(u,v)$, as its reference direction. A gain function $\varphi \colon E \to G$ then assigns to each oriented edge an element of a group $G$ with identity $e$, subject to the rule that reversing the orientation inverts the gain, $\varphi(v,u) = \varphi(u,v)^{-1}$ which is the inverse of group element $\varphi(u,v)$. Reading the gains consistently along the chosen orientations, a cycle $C = e_1, e_2, \ldots, e_k$ is \emph{balanced} precisely when its gain is the identity, i.e., $\varphi(C) \;=\; \varphi(e_1)\varphi(e_2)\cdots\varphi(e_k) \;=\; e .$
The gain group of interest is the circle group $\mathbb{T} = \{\, z \in \mathbb{C} : |z| = 1 \,\}$, a multiplicative abelian group in which $z^{-1} = \overline{z}$. Its cyclic subgroup of $k$th roots of unity is denoted $U_k$; in particular $\{\pm 1\} = U_2$ and $\{1,-1,\mathrm{i},-\mathrm{i}\} = U_4$. Any diagonal matrix $S = \operatorname{diag}(s_1,s_2,\dots,s_n)$ with $s_i \in \mathbb{T}$ is unitary, with $S^{-1} = S^{*}$.

The gain-graph framework has proved a natural home for several matrix constructions at once. The circle group $\mathbb{T}$ is large enough to absorb earlier ones: it contains the signed graphs as introduced by Harary~\cite{Harary}, where $G=\{\pm 1\} \subset \mathbb{T}$, and restricting the gains to the fourth roots of unity $\{\,1,-1,\mathrm{i},\mathrm{-i}\,\} \subset \mathbb{T}$ recovers the Hermitian adjacency matrix of a mixed graph, proposed independently by Liu and Li~\cite{LiuLi} and by Guo and Mohar~\cite{GuoMohar}, which has since produced a large literature on the interplay of orientation, gain, and spectrum. The spectral theory of complex unit-gain graphs was systematically developed by Reff~\cite{Reff}, who established a switching-invariant notion of balance, along with spectral-radius bounds in terms of the underlying graph.

 Acharya~\cite{Acharya} gave a spectral criterion for balance in networks: a network is cycle balanced exactly when its adjacency matrix is cospectral with its underlying nonnegative counterpart, which also yields a balance criterion for signed digraphs. Building on this, Pirzada and Bhat~\cite{PirzadaBhat} extended graph energy to signed digraphs $S$, whose eigenvalues $z_1,z_2,\dots,z_n$ are in general complex, setting $\mathcal{E}(S) = \sum_{j=1}^n |\operatorname{Re} z_j|$; they computed the energies of signed directed cycles and built infinite families of equienergetic signed digraphs. Nearby work treats the spectra and energy of bipartite signed digraphs~\cite{BhatPirzadaBipartite} and equienergetic signed graphs~\cite{BhatPirzadaEquienergetic} (the notion of equienergetic graphs goes back to Ramane et al.~\cite{Ramane}), finer variants such as the iota energy $\sum_j |\operatorname{Im} z_j|$~\cite{FarooqKhanChand}, complex-adjacency and energy studies of ordinary digraphs~\cite{KhanFarooqRada}, and the spectral characterisation of signed directed cycles~\cite{WissingVanDam}. In much of this directed work the arc labels stay in $\{\pm 1\}$, and one handles $\operatorname{Re} z_j$ and $\operatorname{Im} z_j$ separately.

All of these constructions are specializations of a single object, and it is this common generalization that the present paper takes up: the gain adjacency matrix of a $\mathbb{T}$-gain digraph. A $\mathbb{T}$-gain digraph also called as complex unit gain digraph generalizes the earlier objects in two separate ways. The first enlarges the gain group,
  $\{1\}\ \subset\ \{\pm1\}=U_2\ \subset\ U_4\ \subset\ \mathbb{T}$,
carrying an object from unlabelled, to signed, to the fourth-root labels underlying the Hermitian adjacency matrix, and finally to arbitrary complex unit gains. The second frees the orientation: rather than tying the two arcs of a digon together by the Hermitian relation $\varphi(v_j,v_i)=\overline{\varphi(v_i,v_j)}$, we impose no relation between opposite arcs at all. Putting back either restriction recovers a classical theory. Requiring all gains equal to $1$ returns the underlying digraph $D$; restricting the gains to $\{\pm1\}$ yields a signed digraph; and imposing the
Hermitian relation on the arcs turns a $\mathbb{T}$-gain digraph into a complex unit gain graph, which specializes further to the Hermitian adjacency matrix of a mixed graph when the gains lie in $U_4$, and to a signed graph when they lie in $\{\pm1\}$. Thus signed graphs, mixed graphs, complex unit gain graphs, digraphs, and signed digraphs all sit inside the $\mathbb{T}$-gain digraph as the cases in which one, or both, of these axes is held fixed; releasing both simultaneously is precisely what produces the genuinely complex, non-Hermitian spectral theory developed below.

The paper is organized as follows. Section \ref{sec 2} introduces T-gain digraphs, their adjacency matrices, switching, and cycle balance. It also develops determinant and characteristic polynomial formulae, which are used to show that a gain digraph has the same spectrum as its underlying digraph exactly when every directed cycle has gain 1; for strongly connected digraphs, this is equivalent to being switching equivalent to the all-ones gain. Section \ref{sec 3} studies the bounds on spectral radius , proving that gains cannot increase the spectral radius, characterizing the equality case, and deriving upper and lower bounds. Section \ref{sec4} determines the spectra of directed cycles and unicyclic gain digraphs, showing that the spectrum depends only on the number of vertices, the cycle length, and its gain. Section \ref{sec5} extends these results to the Laplacian and signless Laplacian matrices, giving determinant expansions, characterizing when 0 is an eigenvalue, and showing that all eigenvalues lie in the closed right half-plane. Finally, Section \ref{sec6} concludes the paper by discussing the research gaps and outlining directions for future work.

\section{$\mathbb{T}$-Gain Digraphs, Switching, and Balance} \label{sec 2}

We now introduce the central object of the paper. Let $G$ be a group with identity $e$. A \emph{$G$-gain digraph} is a pair $\Phi = (D,\varphi)$ consisting of a digraph $D = (V,\mathcal{A})$ and a \emph{gain function} $\varphi\colon \mathcal{A} \to G$ assigning a group element to each arc. Our interest is the case $G = \mathbb{T}$, and we call such a $\Phi$ a \emph{$\mathbb{T}$-gain digraph}; its spectral data are carried by the following matrix.

\begin{definition}\label{def:gainmatrix}
The \emph{gain adjacency matrix} $A(\Phi) \in M_n(\mathbb{C})$ of a
$\mathbb{T}$-gain digraph $\Phi = (D,\varphi)$ is
\[
  A(\Phi)_{ij} =
  \begin{cases}
    \varphi(v_i,v_j), & (v_i,v_j) \in \mathcal{A},\\
    0, & \text{otherwise.}
  \end{cases}
\]
\end{definition}

When $\varphi \equiv 1$, i.e. every arc is mapped to $1$ this reduces to the ordinary adjacency matrix, $A(\Phi) = A(D)$. Since $\varphi$ takes values in $\mathbb{T}$, every nonzero entry of $A(\Phi)$ has modulus one. We impose no relation between $\varphi(u,v)$ and $\varphi(v,u)$ when both arcs are present, so $A(\Phi)$ is in general neither symmetric nor Hermitian, and its spectrum may be complex. This matrix generalizes the adjacency matrix of the signed digraphs of
Acharya~\cite{Acharya}, which is the case where the gains are restricted to $\{\pm1\}$. We call $\det(\lambda I - A(\Phi))$ the characteristic polynomial of $\Phi$, and the eigenvalues of $A(\Phi)$ the eigenvalues of $\Phi$.

We call $\Phi$ \emph{symmetric} if reversing any arc gives another arc carrying the conjugate gain: whenever $(u,v)\in \mathcal{A}$, we also have $(v,u)\in \mathcal{A}$ with $\varphi(v,u)=\overline{\varphi(u,v)}=\varphi(u,v)^{-1}$. For a signed digraph this simply says that the two arcs of a digon carry the same sign. In general it forces $A(\Phi)$ to be Hermitian, so its spectrum is real. Symmetric $\mathbb{T}$-gain digraphs are exactly the complex unit gain graphs seen from the directed side: given a complex unit gain graph $\Gamma$, form $\overleftrightarrow{\Gamma}$ on the same vertex set by replacing each gain edge $\{u,v\}$, oriented so that $\varphi(u,v)$ labels one direction, with the pair of arcs $(u,v)$ and $(v,u)$ carrying gains $\varphi(u,v)$ and $\overline{\varphi(u,v)}$. The map $\Gamma\rightsquigarrow \overleftrightarrow{\Gamma}$ is a one-to-one correspondence, and through it a complex unit gain graph can be regarded as a symmetric $\mathbb{T}$-gain digraph.

\begin{example}\label{ex:running}
Let $D$ be a digraph with vertex set $V = \{v_1,v_2,v_3\}$ and arcs
$v_1 \to v_2,\ v_2 \to v_3,\ v_3 \to v_1,\ v_2 \to v_1$ as shown in Figure~\ref{fig:running}. Consider
$\omega = e^{2\pi \mathrm{i}/3}$ for a primitive cube root of unity, and set
\(
  \varphi(v_1,v_2) = \mathrm{i},\
  \varphi(v_2,v_3) = \omega,\
  \varphi(v_3,v_1) = 1,\
  \varphi(v_2,v_1) = \omega^{2}
\).
 Then $A(\Phi)$ is not Hermitian where,
\[
  A(\Phi) =
  \begin{pmatrix}
    0 & \mathrm{i} & 0 \\
    \omega^{2} & 0 & \omega \\
    1 & 0 & 0
  \end{pmatrix}.
\]

\begin{figure}[ht]
\centering
\begin{tikzpicture}[>=stealth, node distance=2.6cm,
  every node/.style={circle, draw, minimum size=6mm, inner sep=1pt}]
  \node (v1) {$v_1$};
  \node (v2) [right of=v1] {$v_2$};
  \node (v3) [below right of=v1] {$v_3$};
  \draw[->] (v1) to[bend left=15] node[draw=none, above] {$\mathrm{i}$} (v2);
  \draw[->] (v2) to[bend left=15] node[draw=none, below] {$\omega^{2}$} (v1);
  \draw[->] (v2) to node[draw=none, right] {$\omega$} (v3);
  \draw[->] (v3) to node[draw=none, left] {$1$} (v1);
\end{tikzpicture}
\caption{Example of $\mathbb{T}$-gain digraph $\Phi$.}
\label{fig:running}
\end{figure}
\end{example}

An \emph{elementary subdigraph} of $D$ is a subdigraph in which every vertex has in-degree and out-degree exactly $1$; equivalently, it is a vertex-disjoint union of directed cycles. For $1 \le k \le n$ we write $\mathcal{E}_k(D)$ for the set of elementary subdigraphs covering exactly $k$ vertices, and $\mathcal{E}_n(D)$ for the \emph{spanning} elementary subdigraphs. For $\mathsf{E} \in \mathcal{E}_k(D)$ we let $\mathcal{C}(\mathsf{E})$ denote the set of directed cycles composing $\mathsf{E}$ and $c(\mathsf{E}) = | \mathcal{C}(\mathsf{E})|$ their number.

 The gain of a directed walk
$W = v_{i_0} \to v_{i_1} \to \cdots \to v_{i_k}$ is defined as
\[
  \varphi(W) = \prod_{s=0}^{k-1}\varphi(v_{i_s},v_{i_{s+1}}) \in \mathbb{T},
\]
and for $C \in \mathcal{C}(\mathsf{E})$ its gain $\varphi(C)$ is this product taken once around the cycle. The \emph{gain} of an elementary subdigraph $\mathsf{E}$ is the product of the gains of its cycles,
\[
  \varphi(\mathsf{E}) = \prod_{C \in \mathcal{C}(\mathsf{E})}\varphi(C) \in \mathbb{T}.
\]

Let $\ell(C)$ denote the length of a directed cycle $C$. We call
$\Phi = (D,\varphi)$ \emph{cycle balanced} if $\varphi(C) = 1$ for every directed cycle $C$ of $D$, and \emph{cycle antibalanced} if
$\varphi(C) = (-1)^{\ell(C)}$ for every directed cycle $C$ of $D$.
Equivalently, $\Phi$ is antibalanced precisely when $(D,-\varphi)$ is
balanced, since reversing the sign of every arc multiplies the gain of a length-$\ell$ cycle by $(-1)^{\ell}$.

\begin{definition}\label{def:switching}
Given $\sigma\colon V \to \mathbb{T}$, the \emph{switching} of $\Phi$ by $\sigma$ is $\Phi^{\sigma} = (D,\varphi^{\sigma})$, where
$\varphi^{\sigma}(v_i,v_j) = \sigma(v_i)\,\varphi(v_i,v_j)\,\overline{\sigma(v_j)}$. Two $\mathbb{T}$-gain digraphs $\Phi,\Phi'$ on the same digraph $D$ are \emph{switching equivalent}, written $\Phi \sim \Phi'$, if $\Phi' = \Phi^{\sigma}$ for some such $\sigma$.
\end{definition}

Consider a matrix $S = \operatorname{diag}(\sigma(v_1),\dots,\sigma(v_n))$, which is unitary because $|\sigma(v_i)| = 1$, then switching is exactly conjugation by $S$: $A(\Phi^{\sigma}) = S\,A(\Phi)\,S^{-1}$.  A \emph{potential function} for $\varphi$ is a map $\psi\colon V \to \mathbb{T}$ with $\varphi(v_i,v_j) = \overline{\psi(v_i)}\,\psi(v_j)$ for every arc $(v_i,v_j)$; taking $\sigma = \overline{\psi}$ then  the function yields $\varphi^{\sigma} \equiv 1$. So the existence of a potential function means exactly that $\Phi \sim (D,1)$, a condition that will turn out, under strong connectivity, to coincide with balance and with cospectrality with $D$. Here $\Phi$ is \emph{cospectral with $D$} if $A(\Phi)$ and $A(D)$ have the same characteristic polynomial.

The determinant of the gain adjacency matrix admits a combinatorial expansion in terms of spanning elementary subdigraphs of the underlying digraph. This provides a directed analogue of the classical determinant formula for graphs.

\begin{theorem}
  \label{thm:sachs-det}
  Let $\Phi = (D, \varphi)$ be a $\mathbb{T}$-gain digraph on $n$ vertices with gain adjacency matrix $A(\Phi)$. Let $\mathcal{E}_n(D)$ denote the set of all spanning elementary subdigraphs of $D$. For $\mathsf{E} \in \mathcal{E}_n(D)$, 
  %let $c(\mathsf{E})$ denote the number of disjoint directed cycles composing $\mathsf{E}$, and 
  let $\varphi(\mathsf{E}) = \prod_{C \in \mathsf{E}} \varphi(C)$ denote its gain. Then
  \[
    \det(A(\Phi)) = \sum_{\mathsf{E} \in \mathcal{E}_n(D)} (-1)^{n - c(\mathsf{E})} \varphi(\mathsf{E}).
  \]
\end{theorem}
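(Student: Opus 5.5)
We expand the determinant with the Leibniz formula,
\[
\det(A(\Phi))=\sum_{\pi\in S_n}\operatorname{sgn}(\pi)\prod_{i=1}^n A(\Phi)_{i\,\pi(i)},
\]
and identify the permutations whose terms do not vanish.

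Since $D$ is loopless, $A(\Phi)_{ii}=0$ for all $i$. Hence a term is nonzero only if $\pi$ has no fixed points and $(v_i,v_{\pi(i)})\in\mathcal{A}$ for every $i$. For such a $\pi$, let $\mathsf{E}_\pi$ be the subdigraph with arcs $\{(v_i,v_{\pi(i)})\}$. Every vertex then has out-degree one (its image under $\pi$) and in-degree one (its preimage). Because $\pi$ has no fixed points, each cycle of $\pi$ has length at least $2$. A $2$-cycle $(i\,j)$ of $\pi$ uses both arcs $(v_i,v_j)$ and $(v_j,v_i)$, which is exactly a digon. So $\mathsf{E}_\pi$ is a spanning elementary subdigraph, and its directed cycles correspond to the cycles of $\pi$.

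Conversely, any $\mathsf{E}\in\mathcal{E}_n(D)$ determines a permutation $\pi_{\mathsf{E}}$ by sending each $v_i$ to its unique out-neighbour in $\mathsf{E}$. Since there is at most one arc per ordered pair, $\pi\mapsto\mathsf{E}_\pi$ is a bijection between the contributing permutations and $\mathcal{E}_n(D)$.

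We then compute each term. The sign of a permutation with $c$ disjoint cycles (including any fixed points, of which there are none here) on $n$ points is $(-1)^{n-c}$. A cycle of length $\ell$ has sign $(-1)^{\ell-1}$, and these lengths sum to $n$. Therefore $\operatorname{sgn}(\pi)=(-1)^{n-c(\mathsf{E}_\pi)}$.

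For the product, the entries $A(\Phi)_{i\pi(i)}=\varphi(v_i,v_{\pi(i)})$ group according to the cycles of $\pi$. Following a cycle $v_{i_0}\to v_{i_1}\to\cdots\to v_{i_0}$ gives the product $\varphi(C)$ for the corresponding directed cycle $C$. Since $\mathbb{T}$ is commutative, the starting point and the order of the factors do not matter. Hence $\prod_i A(\Phi)_{i\pi(i)}=\prod_{C}\varphi(C)=\varphi(\mathsf{E}_\pi)$. Summing over the bijection gives the stated formula.

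The only step that needs care is the bijection. We must check that $2$-cycles of $\pi$ are legitimate digons, which requires both opposite arcs to be present; their gains are independent, and we simply multiply them. We must also confirm that no permutation term is double counted. Both checks are straightforward once we note that $D$ has no loops and no multiple arcs. Everything else is bookkeeping.
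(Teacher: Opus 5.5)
Your proof is correct and follows the same route as the paper: expand by the Leibniz formula, match the nonvanishing permutations bijectively with spanning elementary subdigraphs, use $\operatorname{sgn}(\pi)=(-1)^{n-c(\mathsf{E})}$, and identify the product of entries with $\varphi(\mathsf{E})$. Your explicit remarks that fixed points are ruled out by looplessness and that $2$-cycles correspond to digons are slightly more careful than the paper's write-up, but the argument is the same.
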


\begin{proof}
 The determinant of $A(\Phi)$ is given by
 \begin{equation}\label{eq1}
\det(A(\Phi)) = \sum_{\pi \in S_n} \operatorname{sgn}(\pi)
\prod_{i=1}^{n} A(\Phi)_{i,\pi(i)}.
\end{equation}
  Every permutation $\pi \in S_n$ decomposes uniquely into a product of disjoint cyclic permutations. This cycle decomposition corresponds bijectively to a spanning elementary subdigraph $\mathsf{E}_\pi \in \mathcal{E}_n(D)$, where a directed arc goes from $i$ to $\pi(i)$ for all $i \in \{1, \dots, n\}$. If any arc $(i, \pi(i))$ is not present in the arc set of $D$, then $A(\Phi)_{i,\pi(i)} = 0$, and the corresponding product vanishes. Thus, the summation can be restricted exclusively to permutations that define valid spanning elementary subdigraphs $\mathsf{E} \in \mathcal{E}_n(D)$.

  For a given permutation $\pi$ associated with the spanning elementary subdigraph $\mathsf{E}$, let $c(\mathsf{E})$ be its number of disjoint cycles. The sign of the permutation is given by the standard relation $\operatorname{sgn}(\pi) = (-1)^{n - c(\mathsf{E})}$. Furthermore, the product of the matrix elements along the cycles of $\pi$ yields the gain of the subdigraph:
  \[
    \prod_{i=1}^n A(\Phi)_{i,\pi(i)} = \prod_{C \in \mathsf{E}} \varphi(C) = \varphi(\mathsf{E}).
  \]
  Substituting these into Equation (\ref{eq1}) yields
  \[
    \det(A(\Phi)) = \sum_{\mathsf{E} \in \mathcal{E}_n(D)} (-1)^{n - c(\mathsf{E})} \varphi(\mathsf{E}),
  \]
  which completes the proof.
\end{proof}

As an immediate consequence of the determinant expansion, we obtain a combinatorial description of the coefficients of the characteristic polynomial.

\begin{corollary}
  \label{cor:sachs-charpoly}
  Let $\Phi = (D, \varphi)$ be a $\mathbb{T}$-gain digraph on $n$ vertices. Let $\mathcal{E}_k(D)$ be the set of all elementary subdigraphs of $D$ covering exactly $k$ vertices. Then the characteristic polynomial of $A(\Phi)$ is
  \[
    p_A(\Phi, \lambda) = \det(\lambda I - A(\Phi)) = \sum_{k=0}^{n} (-1)^k a_k(\Phi)\,\lambda^{n-k}, \] 
  
  where $ a_0(\Phi) = 1$ and 
  $    a_k(\Phi) = \displaystyle\sum_{\mathsf{E} \in \mathcal{E}_k(D)} (-1)^{c(\mathsf{E})}\,\varphi(\mathsf{E}), \quad \text{for } k=1,\ldots,n.
  $
  
\end{corollary}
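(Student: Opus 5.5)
The plan is to reduce the characteristic polynomial to principal minors of $A(\Phi)$ and then apply Theorem~\ref{thm:sachs-det} to each minor. First I will recall the standard expansion
\[
\det(\lambda I - A) = \sum_{k=0}^{n} (-1)^k \Big(\sum_{|S|=k} \det A[S]\Big)\lambda^{n-k},
\]
where $A[S]$ is the principal submatrix of $A=A(\Phi)$ on the index set $S\subseteq\{1,\dots,n\}$. This follows from multilinearity of the determinant in the columns of $\lambda I - A$. Expand each column as $\lambda e_j - A e_j$. Choosing the $\lambda e_j$ part for $j\notin S$ and the $-Ae_j$ part for $j\in S$ gives $\lambda^{n-k}(-1)^k\det A[S]$. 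The $k=0$ term is $\lambda^n$, which accounts for $a_0(\Phi)=1$.

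Next, I will observe that $A[S]$ is exactly the gain adjacency matrix of the induced $\mathbb{T}$-gain subdigraph $\Phi[S]=(D[S],\varphi|_{D[S]})$ on $k$ vertices. Theorem~\ref{thm:sachs-det} then gives
\[
\det A[S]=\sum_{\mathsf{E}\in\mathcal{E}_k(D[S])}(-1)^{k-c(\mathsf{E})}\varphi(\mathsf{E}).
\]
The elementary subdigraphs of $D$ covering exactly $k$ vertices are partitioned by their vertex sets $S$, and each of them is a spanning elementary subdigraph of $D[S]$. Hence summing over all $|S|=k$ gives the single sum $\sum_{\mathsf{E}\in\mathcal{E}_k(D)}(-1)^{k-c(\mathsf{E})}\varphi(\mathsf{E})$. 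Since gains multiply along cycles, $\varphi(\mathsf{E})$ is the same whether computed in $D[S]$ or in $D$, so no information is lost in this regrouping.

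The last step, and the one I expect to be the main obstacle, is the sign bookkeeping. This means combining the factor $(-1)^k$ from the minor expansion with the factor $(-1)^{k-c(\mathsf{E})}$ from Theorem~\ref{thm:sachs-det}, and matching the result to the form $(-1)^k a_k(\Phi)$ with $a_k(\Phi)=\sum(-1)^{c(\mathsf{E})}\varphi(\mathsf{E})$. The product of the two signs is $(-1)^{2k-c(\mathsf{E})}=(-1)^{c(\mathsf{E})}$. So the coefficient of $\lambda^{n-k}$ comes out as $\sum_{\mathsf{E}\in\mathcal{E}_k(D)}(-1)^{c(\mathsf{E})}\varphi(\mathsf{E})$, and it must be compared with $(-1)^k a_k(\Phi)$ as written in the statement.

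I will test this comparison on the smallest cases before finalizing, and I expect the test to fail. For $k=1$ there is nothing to check, since $D$ is loopless and $a_1=0$. The directed $3$-cycle with all gains $1$ decides the matter. Its characteristic polynomial is $\lambda^3-1$, while the stated formula gives $a_3(\Phi)=-1$ and $(-1)^3a_3(\Phi)=+1$, that is, $\lambda^3+1$. The two expressions differ by the factor $(-1)^k$, which is nontrivial whenever $k$ is odd. I therefore do not expect the minor-expansion argument, or any other correct argument, to establish the statement exactly as worded. What it does establish is the identity with coefficient $\sum_{\mathsf{E}}(-1)^{c(\mathsf{E})}\varphi(\mathsf{E})$ on $\lambda^{n-k}$, which agrees with the statement only for even $k$.
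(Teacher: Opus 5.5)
Your analysis is correct, and it follows the only route the paper indicates. The paper gives no proof and calls the corollary an immediate consequence of Theorem~\ref{thm:sachs-det}. Combining the principal-minor expansion with that theorem gives $p_A(\Phi,\lambda)=\sum_{k=0}^{n}a_k(\Phi)\lambda^{n-k}$, with no factor $(-1)^k$. So the printed statement is wrong for odd $k$ whenever $a_k(\Phi)\neq 0$, and your directed $3$-cycle counterexample is valid.

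The paper's own results confirm the error. Theorem~\ref{thm:cycle-spectrum} gives $\lambda^n-g$, and setting $\lambda=0$ in the printed formula contradicts Theorem~\ref{thm:sachs-det} for odd $n$ with $a_n(\Phi)\neq 0$. The error is harmless for Theorem~\ref{thm:main1}, which only compares $a_k(\Phi)$ with $a_k(D)$ under the same convention.
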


We now establish a spectral characterization of cycle-balanced $\TT$-gain digraphs. The following theorem shows that cycle balance is completely determined by the spectrum of the gain adjacency matrix.

\begin{theorem}
  \label{thm:main1}
  Let $\Phi = (D, \vp)$ be a $\TT$-gain digraph. Then $\Phi$ is cycle
  balanced if and only if $A(\Phi)$ is cospectral with $A(D)$.
\end{theorem}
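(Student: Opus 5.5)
The forward direction follows directly from Corollary~\ref{cor:sachs-charpoly}. If every directed cycle has gain $1$, then every elementary subdigraph $\mathsf{E}$ has $\varphi(\mathsf{E})=1$. Hence $a_k(\Phi)=\sum_{\mathsf{E}\in\mathcal{E}_k(D)}(-1)^{c(\mathsf{E})}=a_k(D)$ for every $k$, and the characteristic polynomials of $A(\Phi)$ and $A(D)$ coincide.

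For the converse we cannot simply compare coefficients $a_k$: these mix cycles with disjoint unions of cycles, and cancellations among complex gains could occur. Instead I would use traces of powers. Cospectrality gives equal eigenvalue multisets, so $\tr(A(\Phi)^k)=\tr(A(D)^k)$ for all $k\ge 1$. Now $\tr(A(\Phi)^k)=\sum_W \varphi(W)$, summed over closed directed walks $W$ of length $k$, while $\tr(A(D)^k)$ is the number of such walks. Every $\varphi(W)$ has modulus one, so
\[
\Bigl|\sum_W \varphi(W)\Bigr|\le \sum_W 1,
\]
and equality of the sum with the (real, nonnegative) count forces $\varphi(W)=1$ for every closed walk of length $k$. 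This is the equality case of the triangle inequality: a sum of $N$ unit complex numbers equals $N$ only if each term equals $1$, as one sees by taking real parts.

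To finish, take any directed cycle $C$ with $\ell(C)=k$. Traversing $C$ once from a chosen starting vertex is a closed walk of length $k$. By the previous step its gain $\varphi(C)$ equals $1$, so $\Phi$ is cycle balanced.

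The main point needing care is the converse step: the argument must go through traces of powers, which isolate closed walks and are determined by the spectrum (via Newton's identities or directly from eigenvalues), rather than through the Sachs coefficients, where different elementary subdigraphs could cancel. Once that route is chosen, the equality case of the triangle inequality makes the argument short. The converse needs no strong connectivity.
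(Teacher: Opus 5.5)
Your proposal is correct and follows essentially the same route as the paper. The forward direction compares the Sachs coefficients from Corollary~\ref{cor:sachs-charpoly}, and the converse uses $\tr(A(\Phi)^k)=\tr(A(D)^k)$ together with the equality case of the triangle inequality on closed-walk gains. As in the paper, this forces every directed cycle to have gain $1$.
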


\begin{proof}
For the forward implication, it suffices to show that
$a_k(\Phi)=a_k(D)$ for every $k=0,1,\ldots,n$. By
Corollary~\ref{cor:sachs-charpoly}, we have
\[
a_k(\Phi)=\sum_{\mathsf{E}\in\mathcal{E}_k(D)}(-1)^{c(\mathsf{E})}\vp(\mathsf{E}).
\]
Since $\Phi$ is cycle balanced, $\vp(\mathsf{E})=1$ for every
$\mathsf{E}\in\mathcal{E}_k(D)$. Hence $a_k(\Phi)=\sum_{\mathsf{E}\in\mathcal{E}_k(D)}(-1)^{c(\mathsf{E})}=a_k(D)$.
Therefore $A(\Phi)$ and $A(D)$ have the same characteristic polynomial, and so $\Phi$ is cospectral with $D$. Conversely, assume that $A(\Phi)$ and $A(D)$ are cospectral. Then they have the same eigenvalues with multiplicities, and hence  $\tr(A(\Phi)^s) = \tr(A(D)^s)$ for all $ s \ge 1.$ Let $C$ be an arbitrary directed cycle of length $p$. The $(i,i)$-entry of $A(\Phi)^p$ equals the sum of the gains of all directed  walks of length $p$
from $v_i$ to itself. Hence,
\[
\tr(A(\Phi)^p)=\sum_{\substack{W\text{ closed}\\|W|=p}}\varphi(W),
\]
where the sum is over all closed directed walks of length $p$ in $D$.
Similarly, $\tr(A(D)^p)=N_p$, where $N_p$ denotes the number of closed directed walks of length $p$ in $D$. Since $\Phi$ is cospectral with $D$, we have
\[
\sum_{\substack{W\text{ closed}\\|W|=p}}\varphi(W)=N_p.
\]
Each summand has modulus $1$, while the sum equals the number of summands. Hence, by the equality case of the triangle inequality, every closed directed walk of length $p$ has gain $1$. In particular, the walk obtained by traversing $C$ once has gain $1$, so $\varphi(C)=1$. Since $C$ was arbitrary, every directed cycle has gain $1$, and therefore $\Phi$ is cycle balanced.
\end{proof}

Combining the preceding characterization with the notion of switching equivalence, we obtain the following equivalence for strongly connected $\TT$-gain digraphs.

\begin{theorem}
  \label{thm:main2}
  Let $\Phi = (D, \vp)$ be a $\TT$-gain digraph where $D$ is strongly
  connected. Then the following are equivalent:
  \begin{enumerate}[label=\textnormal{(\roman*)}]
    \item $\Phi$ is cycle balanced.
    \item $\Phi \sim (D, \mathbf{1})$, i.e., $\Phi$ has a potential function.
    \item $A(\Phi)$ is cospectral with $A(D)$.
  \end{enumerate}
\end{theorem}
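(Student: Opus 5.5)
The equivalence (i)$\Leftrightarrow$(iii) is exactly Theorem~\ref{thm:main1}, which holds without any connectivity hypothesis. So it suffices to show (i)$\Leftrightarrow$(ii). Strong connectivity enters only in the construction of a potential function.

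\textbf{(ii)$\Rightarrow$(i).} Suppose $\psi$ is a potential function, so $\vp(v_i,v_j)=\overline{\psi(v_i)}\,\psi(v_j)$ on every arc. Take a directed cycle $C=v_{i_0}\to v_{i_1}\to\cdots\to v_{i_k}=v_{i_0}$. Its gain telescopes:
\[
\vp(C)=\prod_{s=0}^{k-1}\overline{\psi(v_{i_s})}\,\psi(v_{i_{s+1}})=\overline{\psi(v_{i_0})}\,\psi(v_{i_0})=1,
\]
using $|\psi|=1$. Alternatively, $A(\Phi)=S^{-1}A(D)S$, so $\Phi$ is cospectral with $D$, and Theorem~\ref{thm:main1} gives (i).

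\textbf{(i)$\Rightarrow$(ii).} This is the substantive direction. Fix a root vertex $r$. For each vertex $v$ choose a directed walk $W_v$ from $r$ to $v$, which exists by strong connectivity, and define $\psi(v)=\vp(W_v)$. First I check that this is well defined, i.e. any two directed walks $W,W'$ from $r$ to $v$ have the same gain. Pick a directed walk $P$ from $v$ back to $r$, again by strong connectivity. Then $WP$ and $W'P$ are closed directed walks.

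\textbf{Key lemma.} Every closed directed walk has gain $1$ under cycle balance. To prove it, decompose the closed walk into directed cycles: take the first repeated vertex, split off the directed cycle between its two occurrences, and induct on length. The gain is multiplicative over this decomposition, so it equals the product of the cycle gains, which is $1$.

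Applying the lemma gives $\vp(W)\vp(P)=1=\vp(W')\vp(P)$, hence $\vp(W)=\vp(W')$.

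Now for an arc $(v_i,v_j)$, the walk $W_{v_i}$ followed by this arc is a walk from $r$ to $v_j$. Well-definedness gives $\psi(v_j)=\psi(v_i)\vp(v_i,v_j)$, so $\vp(v_i,v_j)=\overline{\psi(v_i)}\psi(v_j)$. Thus $\psi$ is a potential function and $\Phi\sim(D,\mathbf 1)$ via $\sigma=\overline{\psi}$.

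The main obstacle is this well-definedness step. It relies on the closed-walk decomposition lemma, and strong connectivity is needed precisely to close up walks with a return path $P$. Without it, walks to $v$ could not be compared by a closed walk.
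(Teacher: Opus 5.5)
Your proof is correct and takes the paper's route: fix a root $r$, let $\psi(v)$ be the gain of a route from $r$ to $v$, use a return route from $v$ to $r$ to show the choice does not matter, and get (i)$\Leftrightarrow$(iii) from Theorem~\ref{thm:main1}. Working with walks and proving that every closed walk has gain $1$ actually repairs two points the paper glosses over: the paper tacitly assumes closed walks have gain $1$, and it asserts that $P_{r\to v_i}$ followed by $(v_i,v_j)$ is a path, which fails when $v_j$ already lies on $P_{r\to v_i}$.

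One small slip, which also appears in the paper's remark after Definition~\ref{def:switching}: the switching that carries $\Phi$ to $(D,\mathbf{1})$ is $\sigma=\psi$, not $\sigma=\overline{\psi}$. Indeed $\varphi^{\sigma}(v_i,v_j)=\sigma(v_i)\,\overline{\psi(v_i)}\,\psi(v_j)\,\overline{\sigma(v_j)}$ equals $1$ for $\sigma=\psi$ but equals $\overline{\psi(v_i)}^{\,2}\psi(v_j)^{2}$ for $\sigma=\overline{\psi}$; your alternative identity $A(\Phi)=S^{-1}A(D)S$ with $S=\operatorname{diag}(\psi)$ already reflects the correct choice.
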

\begin{proof}
The implication \textnormal{(ii)}$\Rightarrow$\textnormal{(iii)} follows immediately, since the existence of a potential function is equivalent to switching equivalence with $(D,\mathbf{1})$. Finally,
\textnormal{(iii)}$\Rightarrow$\textnormal{(i)} is precisely the converse implication of Theorem~\ref{thm:main1}. We prove \textnormal{(i)}$\Rightarrow$\textnormal{(ii)}. Fix a root vertex
$r\in V$. Since $D$ is strongly connected, for each $v\in V$ there exists a directed path $P_{r\to v}$ from $r$ to $v$. Define
\[
\psi(r)=1,\qquad
\psi(v)=\vp(P_{r\to v}).
\]

We first show that $\psi$ is well defined. Let $P$ and $P'$ be two directed paths from $r$ to $v$. By strong connectivity, there exists a directed path $Q$ from $v$ to $r$. Then $P\cup Q$ and $P'\cup Q$ are directed closed walks. Since $\Phi$ is cycle balanced,
\[
\vp(P)\vp(Q)=1=\vp(P')\vp(Q).
\]
As $\vp(Q)\in\TT$ is invertible, it follows that
$\vp(P)=\vp(P')$. Hence $\psi(v)$ is independent of the choice of path.

Now let $(v_i,v_j)\in A(D)$. The path
$P_{r\to v_i}\cup\{(v_i,v_j)\}$ is a directed path from $r$ to $v_j$, so
\[
\psi(v_j)
=\vp(P_{r\to v_i})\,\vp(v_i,v_j)
=\psi(v_i)\vp(v_i,v_j).
\]
Therefore
\[
\vp(v_i,v_j)
=\psi(v_i)^{-1}\psi(v_j)
=\overline{\psi(v_i)}\,\psi(v_j),
\]
showing that $\psi$ is a potential function.
\end{proof}

\begin{example}\label{ex:diamond}
The hypothesis of strong connectivity in Theorem~\ref{thm:main2} cannot be dropped: without it the implication \textnormal{(i)}$\Rightarrow$\textnormal{(ii)} already fails. Let $D$ be the \emph{diamond} on $V=\{u,x,y,w\}$ with arcs $u\to x,\ x\to w,\ u\to y,\ y\to w$ as shown in Figure~\ref{Fig 2}.

\begin{figure}[ht]
\centering
\begin{tikzpicture}[>=stealth,
  every node/.style={circle, draw, minimum size=6mm, inner sep=1pt}]
  \node (u) {$u$};
  \node (x) [above right of=u, node distance=2cm] {$x$};
  \node (y) [below right of=u, node distance=2cm] {$y$};
  \node (w) [below right of=x, node distance=2cm] {$w$};
  \draw[->] (u) to node[draw=none, above left] {$\zeta$} (x);
  \draw[->] (x) to node[draw=none, above right] {$\zeta$} (w);
  \draw[->] (u) to node[draw=none, below left] {$\overline{\zeta}$} (y);
  \draw[->] (y) to node[draw=none, below right] {$\overline{\zeta}$} (w);
\end{tikzpicture}
\caption{A cycle balanced digraph cospectral with $A(D)$ having no potential function.}
\label{Fig 2}
\end{figure}
Let $\zeta=e^{\mathrm{i}\pi/3}$ be a primitive sixth root of unity. Consider the gains
\(
\vp(u,x)=\vp(x,w)=\zeta, \vp(u,y)=\vp(y,w)=\overline{\zeta}.
\)
  Since $D$ is acyclic, it is not strongly connected yet it is vacuously cycle balanced. Its gain adjacency matrix is strictly upper triangular with the same all-zero spectrum as $A(D)$, so \textnormal{(iii)} also holds. However, no potential function exists. Without loss of generality let $\psi(u)=1$, since a potential function is determined only up to a common unit scalar. The path $u\to x\to w$ gives $\psi(w)=\vp(u,x)\vp(x,w)=\zeta^{2}=e^{2\pi \mathrm{i}/3}$, whereas the path $u\to y\to w$ gives $\psi(w)=\vp(u,y)\vp(y,w)=\zeta^{-2}=e^{-2\pi \mathrm{i}/3}$, a contradiction since $e^{2\pi \mathrm{i}/3}\neq e^{-2\pi \mathrm{i}/3}$.  Hence \textnormal{(ii)} fails while \textnormal{(i)} and \textnormal{(iii)} hold. 

\end{example}

\section{Bounds on Spectral Radius} \label{sec 3}

The spectral radius which is the maximum modulus of its eigenvalues is one of the most important spectral invariants associated with the adjacency matrix of a digraph. In this section, we establish general bounds for the spectral radius of a $\TT$-gain digraph denoted by $\rho(\Phi)$ in terms of structural parameters of its underlying digraph. We first compare the spectral radius with that of the underlying digraph and characterize the equality case, and then derive further upper and lower bounds. To prove these results, we first recall several known facts from matrix theory and spectral graph theory that will be used throughout this section.

\begin{theorem}[ {\cite[p.~534]{HornJohnson}}] \label{a}
Let $M \in M_n(\mathbb{R})$ be a nonnegative irreducible matrix with $n\ge2$. Then the following properties hold:

\begin{enumerate}
    \item The spectral radius $\rho(M)$ is strictly positive.
    \item The eigenvalue $\rho(M)$ has algebraic multiplicity one.
    \item There exists a unique positive right eigenvector $x$ (up to normalization) satisfying $Mx=\rho(M)x$.
    \item There exists a unique positive left eigenvector $y$ (up to normalization) satisfying $y^{T}M=\rho(M)y^{T}$.
\end{enumerate}
\end{theorem}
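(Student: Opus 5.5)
The statement is the Perron--Frobenius theorem for irreducible nonnegative matrices, which the paper quotes from Horn and Johnson. I would derive it from the primitive, strictly positive case by considering $B=(I+M)^{n-1}$. Irreducibility means the digraph of $M$ is strongly connected, so between any two indices there is a directed path of length at most $n-1$. Expanding $(I+M)^{n-1}$ binomially and using nonnegativity of $M$ then gives $B>0$ entrywise.

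\textbf{Positive eigenvector.} First I establish existence of a nonnegative eigenvector through the Collatz--Wielandt function
\[
r(x)=\min_{x_i>0}\frac{(Mx)_i}{x_i}
\]
on the simplex $\{x\ge 0,\ \sum x_i=1\}$. The function $r$ is only upper semicontinuous there, so I maximize it on the compact set $B(\text{simplex})$ instead; this set consists of positive vectors, and on it $r$ is continuous. Because $B$ commutes with $M$, we have $r(Bx)\ge r(x)$, so the supremum is attained at some $z$, and I set $r^*=r(z)$. If $Mz-r^*z\ge 0$ but is nonzero, applying $B$ gives $MBz-r^*Bz>0$. Then $r(Bz)>r^*$, a contradiction, so $Mz=r^*z$. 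Moreover $Bz=(1+r^*)^{n-1}z$ with $Bz>0$, hence $z>0$. This proves (3) up to uniqueness.

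\textbf{$r^*=\rho(M)$.} For any eigenpair $My=\lambda y$, the triangle inequality gives $|\lambda|\,|y|\le M|y|$, so $|\lambda|\le r(|y|)\le r^*$. Item (1) follows because $M\neq 0$ when $M$ is irreducible and $n\ge 2$. Indeed, if $r^*=0$ then $Mz=0$ with $z>0$ would force $M=0$.

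\textbf{Simplicity.} For geometric simplicity, suppose a second real eigenvector $w$ for $\rho$ is independent of $z$. Then I can choose $t$ so that $z-tw\ge 0$ has a zero entry while not being identically zero. This vector is still a $\rho$-eigenvector of $M$, hence of $B$ with eigenvalue $(1+\rho)^{n-1}$. But $B(z-tw)>0$, so $z-tw$ has no zero entry, a contradiction. (A complex eigenvector can be split into real and imaginary parts, which reduces to this case.) Algebraic simplicity is the main obstacle. I would rule out a Jordan chain $(M-\rho I)u=z$ by pairing with a positive left eigenvector $y$. That $y$ comes from applying the same argument to $M^{T}$, which is also irreducible, giving item (4). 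Pairing yields
\[
y^{T}z=y^{T}(M-\rho I)u=0,
\]
which contradicts $y,z>0$. So $\rho$ has algebraic multiplicity one, and uniqueness in (3) and (4) follows from geometric simplicity.
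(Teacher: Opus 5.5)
The paper does not prove this statement. It quotes it from Horn and Johnson as background, so there is no in-paper argument to match yours against.

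Your argument is a correct and complete proof: it is Wielandt's Collatz--Wielandt proof. The positive matrix $B=(I+M)^{n-1}$ does three jobs in it:
\begin{enumerate}
\item it makes $r$ continuous on the compact set $B(\text{simplex})$;
\item it forces the maximizer to be an eigenvector;
\item it rules out a nonzero nonnegative $\rho$-eigenvector with a zero entry.
\end{enumerate}
Algebraic simplicity then comes from pairing a putative Jordan chain $(M-\rho I)u=z$ with the positive left Perron vector. This is legitimate because you established geometric simplicity first, so the bottom of any chain must be a multiple of $z$.

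The cited source reaches the theorem in stages instead. First comes Perron's theorem for positive matrices: start from an eigenvector $x$ of a maximum-modulus eigenvalue and show that $|x|$ is a positive $\rho$-eigenvector. Next, a limiting argument through $M+\epsilon J$, with $J$ the all-ones matrix, gives a nonnegative $\rho$-eigenvector for every nonnegative $M$. Finally, the irreducible case is reached by passing through the positive matrix $(I+M)^{n-1}$, whose eigenvalues are the numbers $(1+\lambda)^{n-1}$. This step upgrades that eigenvector to a positive one and lets the irreducible case inherit simplicity from the positive case.

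Your route is self-contained and gives the max--min formula $\rho(M)=\max_{x\ge 0,\,x\ne 0}\min_{x_i>0}(Mx)_i/x_i$ as a by-product. The staged route is shorter once the positive-matrix theory is available.

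One point is worth making explicit in your write-up. Since $r$ is invariant under positive scaling, $r^*$ is the supremum of $r$ over all nonzero nonnegative vectors. This is what justifies comparing $r(Bz)$ and $r(|y|)$ with $r^*$, because $Bz$ and $|y|$ need not lie in $B(\text{simplex})$.
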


Theorem \ref{a} is the Perron-Frobenius theorem for nonnegative irreducible matrices, which provides fundamental properties of the spectral radius and its associated positive eigenvectors. Since the adjacency matrix of a digraph is a nonnegative matrix, it is natural to relate its irreducibility to the connectivity structure of the underlying digraph. The following result establishes this correspondence by characterizing irreducibility in terms of strong connectivity, allowing us to apply Theorem \ref{a} to the adjacency matrix of a strongly connected digraph.

\begin{theorem}[{\cite[p.~55]{BrualdiRyser1991}}] \label{b}
Let $D=(V,\mathcal{A})$ be a digraph with adjacency matrix $A$. Then $A$ is irreducible if and only if $D$ is strongly connected.
\end{theorem}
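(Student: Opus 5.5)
We argue both directions through the standard definition: a nonnegative $n\times n$ matrix $A$ (with $n\ge 2$) is \emph{reducible} if there is a permutation matrix $P$ such that
\[
P^{\top}AP=\begin{pmatrix} B & C\\ 0 & E\end{pmatrix},
\]
with $B$ and $E$ square and nonempty. Equivalently, $A$ is reducible if and only if there is a partition $V=V_1\sqcup V_2$ with both parts nonempty and $A_{ij}=0$ for all $i\in V_2$, $j\in V_1$. We prove the contrapositive in each direction: $A$ is reducible if and only if $D$ is not strongly connected.

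First suppose $D$ is not strongly connected. Choose vertices $u,w$ such that there is no directed path from $u$ to $w$. Let $V_2$ be the set of vertices reachable from $u$ by a directed walk, including $u$ itself, and let $V_1=V\setminus V_2$. Then $V_2$ is nonempty because $u\in V_2$, and $V_1$ is nonempty because $w\in V_1$. No arc leaves $V_2$: if $(v_i,v_j)\in\mathcal{A}$ with $v_i\in V_2$, then $v_j$ is reachable from $u$, so $v_j\in V_2$. Hence $A_{ij}=0$ whenever $v_i\in V_2$ and $v_j\in V_1$. Order the vertices with $V_1$ first and $V_2$ second; this permutation puts $A$ in the block triangular form above, so $A$ is reducible.

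Conversely, suppose $A$ is reducible, and take the corresponding partition $V=V_1\sqcup V_2$ with no arcs from $V_2$ to $V_1$. Pick any $v\in V_2$ and $w\in V_1$. Suppose a directed path $v=x_0\to x_1\to\cdots\to x_k=w$ existed. Since it starts in $V_2$ and ends in $V_1$, there is a first index $t$ with $x_t\in V_1$. Then $x_{t-1}\in V_2$, and the arc $(x_{t-1},x_t)$ goes from $V_2$ to $V_1$, which is a contradiction. So there is no directed path from $v$ to $w$, and $D$ is not strongly connected.

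The only points needing care are that the permutation-similarity form of reducibility matches the partition form, which is just relabeling rows and columns simultaneously, and that both parts of the partition are nonempty. The trivial case $n=1$ is handled by the usual convention that a $1\times 1$ matrix is irreducible and a single vertex is strongly connected.
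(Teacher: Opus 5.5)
Your proof is correct and is the standard argument for this fact. The paper gives no proof of its own and simply cites Brualdi--Ryser, where the result is established in essentially the same way: a nonempty proper vertex set closed under out-neighbours, such as the set of vertices reachable from a fixed vertex, is exactly a block-triangular form under simultaneous permutation, and such a block structure blocks every directed path from the second block into the first. Your handling of $n=1$ is purely a matter of convention, and it is the convention under which the cited statement holds as written.
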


We now introduce the following generalization of cycle balance.

\begin{definition}
Let $\Phi=(D,\varphi)$ be a $\TT$-gain digraph and let $\mu\in\TT$. We say that $\Phi$ is \emph{$\mu$-balanced} if $\varphi(C)=\mu^{|C|}$ for every directed cycle $C$ of $D$ where $|C|$ is the length of the directed cycle.
\end{definition}

The next result characterizes $\mu$-balanced gain digraphs in terms of switching equivalence with a constant gain digraph.

\begin{lemma}\label{lem:mubalance}
Let $\Phi=(D,\varphi)$ be a $\TT$-gain digraph with strongly connected underlying digraph $D$, and let $\mu\in\TT$. Then $\Phi$ is
$\mu$-balanced if and only if it is switching equivalent to the
constant gain digraph $(D,\mu\cdot\mathbf{1})$, where every arc of $D$ is assigned the same gain $\mu\in\TT$.
\end{lemma}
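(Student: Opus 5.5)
The plan is to reduce the lemma to Theorem~\ref{thm:main2} by rescaling the gains. Given $\mu\in\TT$, define a new gain function $\varphi'=\overline{\mu}\,\varphi$ on the same digraph $D$, so $\Phi'=(D,\varphi')$ satisfies $A(\Phi')=\overline{\mu}A(\Phi)$. For any directed cycle $C$, the gain multiplies over its $|C|$ arcs, so
\[
\varphi'(C)=\overline{\mu}^{\,|C|}\varphi(C).
\]
Consequently $\Phi$ is $\mu$-balanced if and only if $\Phi'$ is cycle balanced.

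Next I will verify that switching commutes with this scaling. For any $\sigma\colon V\to\TT$,
\[
(\mu\varphi)^{\sigma}(v_i,v_j)=\sigma(v_i)\mu\varphi(v_i,v_j)\overline{\sigma(v_j)}=\mu\,\varphi^{\sigma}(v_i,v_j),
\]
since $\mu$ is a scalar. Hence $\Phi'\sim(D,\mathbf 1)$ via some $\sigma$ exactly when $\Phi\sim(D,\mu\cdot\mathbf 1)$ via the same $\sigma$. Equivalently, in matrix form, $SA(\Phi)S^{-1}=\mu A(D)$ holds if and only if $SA(\Phi')S^{-1}=A(D)$.

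Since $D$ is strongly connected, the equivalence (i)$\Leftrightarrow$(ii) of Theorem~\ref{thm:main2} applied to $\Phi'$ gives the chain
\[
\Phi \text{ is } \mu\text{-balanced} \iff \Phi' \text{ is cycle balanced} \iff \Phi'\sim(D,\mathbf 1) \iff \Phi\sim(D,\mu\cdot\mathbf 1).
\]

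As a sanity check, the direction ``switching equivalent $\Rightarrow$ $\mu$-balanced'' can also be seen directly and needs no strong connectivity. Cycle gains are switching invariant, because the $\sigma$ factors telescope around a closed walk, and the constant digraph has $\varphi(C)=\mu^{|C|}$. Strong connectivity is only needed for the converse, where it is inherited from Theorem~\ref{thm:main2}. There is no real obstacle here; the only point requiring care is confirming that scalar multiplication of all gains commutes with switching, which is immediate because $\TT$ is abelian.
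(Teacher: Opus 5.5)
Your proposal is correct and follows essentially the same route as the paper. You rescale the gains by $\overline{\mu}$ to turn $\mu$-balance into cycle balance, apply Theorem~\ref{thm:main2}, and carry the switching back using the fact that scalar multiplication commutes with switching; the paper proves the converse direction directly from switching invariance of cycle gains, just as in your sanity check.
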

\begin{proof}
Define $\psi(u,v)=\mu^{-1}\varphi(u,v)$ for each arc $(u,v)$ and let
$\Psi=(D,\psi)$. Then, for every directed cycle $C$,
\[
\psi(C)=\mu^{-|C|}\varphi(C).
\]
Hence $\Phi$ is $\mu$-balanced if and only if $\Psi$ is balanced. Since
$D$ is strongly connected, Theorem~\ref{thm:main2} implies that
$\Psi$ is balanced if and only if $\Psi\sim(D,\mathbf1)$. If $\Psi^\sigma=(D,\mathbf1)$, then $1=\sigma(u)^{-1}\mu^{-1}\varphi(u,v)\sigma(v)$, for every arc $(u,v)$, so $\varphi^\sigma(u,v)=\mu$. Thus $\Phi^\sigma=(D,\mu\cdot\mathbf1)$, proving $\Phi\sim(D,\mu\cdot\mathbf1)$.

Conversely, if $\Phi\sim(D,\mu\cdot\mathbf1)$, then switching preserves cycle gains, and every directed cycle of $(D,\mu\cdot\mathbf1)$ has gain $\mu^{|C|}$. Hence $\Phi$ is $\mu$-balanced.
\end{proof}

We now state the following results due to Horn and Johnson \cite{HornJohnson}.

\begin{theorem}[{\cite[P.~520]{HornJohnson}}]
\label{thm:horn}
Let $M,N\in M_n$, and suppose that $N$ is nonnegative. If
\(
|M|\le N,
\)
then
\(
\rho(M)\le \rho(|M|)\le \rho(N).
\)
\end{theorem}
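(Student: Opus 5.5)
The plan is to prove the two inequalities separately. The first, $\rho(M)\le\rho(|M|)$, comes from a power and norm argument via Gelfand's formula. The second, $\rho(|M|)\le\rho(N)$, is monotonicity of the spectral radius on nonnegative matrices. Both rest on the entrywise estimate
\[
|M^k|\le |M|^k\le N^k \qquad (k\ge 1).
\]
The first inequality holds by the triangle inequality applied to $(M^k)_{ij}=\sum M_{i l_1}M_{l_1 l_2}\cdots M_{l_{k-1}j}$. The second follows by induction on $k$, since products of nonnegative matrices preserve entrywise order: if $0\le P\le Q$ and $0\le R\le S$, then $PR\le QS$.

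Next, fix a monotone matrix norm, for instance the maximum absolute row sum norm $\|X\|_\infty=\max_i\sum_j|X_{ij}|$. It satisfies $\|X\|_\infty=\||X|\|_\infty$, and $\|X\|_\infty\le\|Y\|_\infty$ whenever $|X|\le Y$. Combining this with the estimate above gives
\[
\|M^k\|_\infty\le\||M|^k\|_\infty\le\|N^k\|_\infty .
\]
Taking $k$th roots and letting $k\to\infty$, Gelfand's formula $\rho(X)=\lim_{k\to\infty}\|X^k\|^{1/k}$ yields $\rho(M)\le\rho(|M|)\le\rho(N)$.

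The only non-elementary ingredient is Gelfand's formula, and this is where I expect the main obstacle. If we want to avoid quoting it, the fallback is the following argument. Suppose $\|X^k\|\le\|Y^k\|$ for all $k$, and suppose $\rho(X)>\rho(Y)$. Choose $r$ with $\rho(Y)<r<\rho(X)$. Then $Y^k/r^k\to0$, since every eigenvalue of $Y/r$ has modulus less than $1$; this can be seen via the Jordan form. Meanwhile $\|X^k\|/r^k\ge\rho(X)^k/r^k\to\infty$, using $\rho(X)^k=\rho(X^k)\le\|X^k\|$, which holds because an eigenvector $x$ of $X$ satisfies $\|X^k x\|=|\lambda|^k\|x\|$. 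This contradiction gives $\rho(X)\le\rho(Y)$, and applying it twice completes the proof.
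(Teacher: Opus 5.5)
Your argument is correct and is essentially the standard proof. The paper gives no proof of its own and only quotes Horn--Johnson, whose argument is exactly your chain $|M^k|\le |M|^k\le N^k$, followed by passage to a monotone norm (they use the Frobenius norm where you use $\|\cdot\|_\infty$, which makes no difference) and Gelfand's formula $\rho(X)=\lim_{k\to\infty}\|X^k\|^{1/k}$; your elementary fallback via $Y^k/r^k\to 0$ and $\rho(X)^k\le\|X^k\|$ is also sound.
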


\begin{theorem}[{\cite[p.~533]{HornJohnson}}]\label{thm:wielandt}
Let $M, N \in M_n$. Suppose that $N$ is nonnegative and irreducible, and
\(
N \ge |M|.
\)
Let $\lambda=e^{\mathrm{i}\theta}\rho(M)$ be a given maximum-modulus eigenvalue of $M$. If
\(
\rho(N)=\rho(M),
\)
then there is a diagonal unitary matrix $S\in M_n$ such that
\(
M=e^{\mathrm{i}\theta}SNS^{-1}.
\)
\end{theorem}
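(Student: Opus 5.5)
This statement is Wielandt's theorem, which the paper quotes from Horn and Johnson. The plan is to use Perron--Frobenius (Theorem~\ref{a}) for the irreducible matrix $N$, together with the triangle inequality.

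First we reduce to a statement about $|M|$. Let $Mx=\lambda x$ with $x\neq 0$, where $|\lambda|=\rho(M)=\rho(N)=:r$. Taking moduli entrywise gives
\[
r|x| = |Mx| \le |M|\,|x| \le N|x|.
\]
Let $y>0$ be the positive left Perron vector of $N$ from Theorem~\ref{a}(4). Multiplying the chain on the left by $y^{T}$ gives $r\,y^{T}|x|\le y^{T}N|x| = r\,y^{T}|x|$. So the nonnegative vector $N|x|-r|x|$ is annihilated by the strictly positive $y$, and hence it vanishes. Every inequality in the chain is therefore an equality:
\[
N|x| = r|x| \qquad\text{and}\qquad |Mx| = |M|\,|x| = N|x|.
\]
By Theorem~\ref{a}(3), $|x|$ is then a positive multiple of the Perron vector. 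In particular every coordinate of $x$ is nonzero; this uses irreducibility, and it is the key point of the argument.

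Next we extract the unitary $S$. Write $x = S|x|$ with $S=\operatorname{diag}(x_j/|x_j|)$, which is diagonal unitary. From $(N-|M|)|x|=0$ with $|x|>0$ and $N-|M|\ge 0$ we get $N=|M|$ entrywise.

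The equality $|(Mx)_i| = \sum_j |m_{ij}||x_j|$ is equality in the triangle inequality. Hence all nonzero terms $m_{ij}x_j$ share a common argument, namely that of $(Mx)_i=\lambda x_i$. Writing $\lambda=e^{\mathrm{i}\theta}r$, this means that for each $m_{ij}\neq 0$,
\[
m_{ij}\,\frac{x_j}{|x_j|} = e^{\mathrm{i}\theta}\,\frac{x_i}{|x_i|}\,|m_{ij}|.
\]
Equivalently $m_{ij}=e^{\mathrm{i}\theta}s_i\,n_{ij}\,\overline{s_j}$, where $s_i=x_i/|x_i|$ and $n_{ij}=|m_{ij}|$. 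The same formula holds trivially when $m_{ij}=0$, because then $n_{ij}=0$ as well. This is exactly $M=e^{\mathrm{i}\theta}SNS^{-1}$.

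The main obstacle is justifying the common-argument step cleanly. We must know that $\lambda x_i\neq 0$ so that its argument is defined, and this is guaranteed by positivity of $|x|$. We must also handle rows where only one term is nonzero, which is immediate. The rest is bookkeeping.
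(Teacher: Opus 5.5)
Your proof is correct and is essentially the standard argument in Horn and Johnson, which the paper cites without proof. That argument forces $N|x|=\rho(N)|x|$ using the positive left Perron vector, deduces $|x|>0$ and $N=|M|$, and then reads off the phases $e^{\mathrm{i}\theta}s_i\overline{s_j}$ from the equality case of the triangle inequality. One small tightening: to conclude that the nonnegative eigenvector $|x|$ is a multiple of the Perron vector you need the simplicity of $\rho(N)$ from Theorem~\ref{a}(2), since Theorem~\ref{a}(3) as stated only gives uniqueness among positive vectors.
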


The following theorem is a consequence of Theorem~\ref{thm:horn}, Theorem\ref{thm:wielandt} and Lemma \ref{lem:mubalance}.

\begin{theorem}\label{thm:rho-comparison}
Let $\Phi=(D,\varphi)$ be a $\TT$-gain digraph whose underlying digraph $D$ is strongly connected. Then $\rho(\Phi)\le\rho(D)$, with equality if and only if $\Phi$ is $\mu$-balanced for some $\mu\in\TT$.
\end{theorem}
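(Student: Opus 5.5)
The plan is to apply Theorem~\ref{thm:horn} and Theorem~\ref{thm:wielandt} with $M=A(\Phi)$ and $N=A(D)$, and to translate the matrix conclusions into gain conditions via Lemma~\ref{lem:mubalance}. Since every nonzero entry of $A(\Phi)$ has modulus one and sits exactly where $A(D)$ has a $1$, we have entrywise $|A(\Phi)|=A(D)$. Theorem~\ref{thm:horn} then gives $\rho(\Phi)\le\rho(|A(\Phi)|)=\rho(D)$ immediately. By Theorem~\ref{b}, strong connectivity of $D$ makes $A(D)$ irreducible, which is the hypothesis Theorem~\ref{thm:wielandt} needs. (The degenerate case $n=1$ is trivial, since both matrices are zero; for $n\ge 2$ Theorem~\ref{a} gives $\rho(D)>0$.)

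For the ``if'' direction of the equality case, assume $\Phi$ is $\mu$-balanced. By Lemma~\ref{lem:mubalance} there is a switching $\sigma$ with $\Phi^\sigma=(D,\mu\cdot\mathbf 1)$. Since switching is conjugation by the diagonal unitary $S$, this gives $A(\Phi)=S^{-1}(\mu A(D))S$. Hence $\spec(A(\Phi))=\mu\cdot\spec(A(D))$, and because $|\mu|=1$ we get $\rho(\Phi)=\rho(D)$.

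For the ``only if'' direction, assume $\rho(\Phi)=\rho(D)$. Pick an eigenvalue $\lambda=e^{\mathrm i\theta}\rho(\Phi)$ of maximum modulus and set $\mu=e^{\mathrm i\theta}$. Theorem~\ref{thm:wielandt} yields a diagonal unitary $S=\operatorname{diag}(s_1,\dots,s_n)$ with $A(\Phi)=\mu S A(D) S^{-1}$. Reading this entrywise on each arc $(v_i,v_j)$ gives
\[
\varphi(v_i,v_j)=\mu\, s_i\overline{s_j}.
\]
This says $\Phi$ is obtained from $(D,\mu\cdot\mathbf 1)$ by the switching $\sigma(v_i)=s_i$, so $\Phi\sim(D,\mu\cdot\mathbf 1)$. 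Lemma~\ref{lem:mubalance} then gives that $\Phi$ is $\mu$-balanced. One can also check this directly: around a directed cycle the factors $s_i\overline{s_j}$ telescope to $1$, leaving $\varphi(C)=\mu^{|C|}$.

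There is no real obstacle here. The only care needed is to confirm that $|A(\Phi)|=A(D)$ holds exactly, since zero patterns coincide and moduli are one, and to note that $\rho(\Phi)=\rho(\Phi)$ trivially matches the hypothesis of Theorem~\ref{thm:wielandt} once we set $M=A(\Phi)$. It is also worth noting that the $\mu$ produced is the phase of a peripheral eigenvalue. This $\mu$ is determined only up to the cyclic symmetry of $D$, which is harmless because the statement asks merely for the existence of some $\mu$.
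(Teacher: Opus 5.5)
Your proposal is correct and matches the paper's proof essentially step for step: you use $|A(\Phi)|=A(D)$ with Theorem~\ref{thm:horn} for the inequality, then Theorem~\ref{b} and Theorem~\ref{thm:wielandt} to get $A(\Phi)=\mu SA(D)S^{-1}$, i.e.\ $\Phi\sim(D,\mu\cdot\mathbf 1)$, and Lemma~\ref{lem:mubalance} to pass between switching equivalence and $\mu$-balance in both directions. Your extras (the trivial case $n=1$ and the direct telescoping check that $\varphi(C)=\mu^{|C|}$) are harmless, and the phrase ``$\rho(\Phi)=\rho(\Phi)$'' in your last paragraph should read $\rho(A(D))=\rho(A(\Phi))$, which is exactly your equality assumption.
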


\begin{proof}
Since $D$ is strongly connected, $A(D)$ is nonnegative and irreducible
by Theorem~\ref{b}. Every nonzero entry of $A(\Phi)$ has modulus one,
so $|A(\Phi)|=A(D)$, and Theorem~\ref{thm:horn} gives
$\rho(\Phi)\le\rho(D)$.

Suppose $\rho(\Phi)=\rho(D)$, and let $\lambda\in\spec(A(\Phi))$ with
$|\lambda|=\rho(D)$. Write $\lambda=\mu\rho(D)$ with
$\mu=\lambda/|\lambda|\in\TT$. By
Theorem~\ref{thm:wielandt} there is a diagonal $S\in M_n(\CC)$ with
$|S|=I$ such that $A(\Phi)=\mu\,S\,A(D)\,S^{-1}$. Writing
$S=\operatorname{diag}(\sigma(v_1),\dots,\sigma(v_n))$ with
$\sigma\colon V\to\TT$, this reads
$\varphi(v_i,v_j)=\mu\,\sigma(v_i)\,\overline{\sigma(v_j)}$ for each
arc $(v_i,v_j)$, that is, $\Phi^{\sigma^{-1}}=(D,\mu\cdot\mathbf 1)$.
Hence $\Phi\sim(D,\mu\cdot\mathbf 1)$, and $\Phi$ is $\mu$-balanced by
Lemma~\ref{lem:mubalance}.

Conversely, if $\Phi$ is $\mu$-balanced then
$\Phi\sim(D,\mu\cdot\mathbf 1)$ by Lemma~\ref{lem:mubalance}, so
$A(\Phi)$ is similar to $\mu A(D)$ and
$\spec(A(\Phi))=\mu\,\spec(A(D))$. As $|\mu|=1$, $\rho(\Phi)=\rho(D)$.
\end{proof}

If some pair $\{v_i,v_j\}$ forms a digon with conjugate gains
$\vp(v_j,v_i)=\overline{\vp(v_i,v_j)}$, then this $2$-cycle has gain
$\vp(v_i,v_j)\vp(v_j,v_i)=|\vp(v_i,v_j)|^2=1$, and $\mu$-balance applied to it gives $\mu^{2}=1$, hence $\mu\in\{+1,-1\}$. In particular, once the gains are reciprocated conjugately, only balance and antibalance can attain $\rho(\Phi)=\rho(D)$. This generalises, in the undirected case, the result of Mehatari et al.~\cite{Mehatari2022}, who proved that the spectral radius of a complex unit gain graph equals that of its underlying graph precisely when the gain graph is balanced or antibalanced.

\begin{theorem}\label{them:upp}
  \label{thm:geom-bound}
  For any $\TT$-gain digraph $\Phi=(D,\vp)$, $\rho(\Phi) \leq \Delta^+$.
\end{theorem}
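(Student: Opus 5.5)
The plan is to bound the spectral radius of $A(\Phi)$ by an induced matrix norm and then read that norm off the row structure of the gain adjacency matrix. The key fact is that for any $M\in M_n(\CC)$ and any eigenvalue $\lambda$ of $M$, we have $|\lambda|\le\|M\|_\infty$, where $\|M\|_\infty=\max_i\sum_j|M_{ij}|$ is the maximum absolute row sum. I will take $M=A(\Phi)$ and compute $\|A(\Phi)\|_\infty$ directly.

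First I would prove the inequality $|\lambda|\le\|M\|_\infty$ in a self-contained way, to avoid quoting anything beyond the paper. Let $A(\Phi)x=\lambda x$ with $x\neq0$, and choose an index $i$ with $|x_i|=\max_j|x_j|>0$. Reading off the $i$th coordinate gives
\[
|\lambda|\,|x_i|=\Bigl|\sum_{j:(v_i,v_j)\in\mathcal{A}}\vp(v_i,v_j)x_j\Bigr|\le\sum_{j:(v_i,v_j)\in\mathcal{A}}|x_j|\le \deg^+(v_i)\,|x_i|.
\]
Here I use that every nonzero entry of row $i$ of $A(\Phi)$ has modulus one, and that the nonzero entries occur exactly at the out-neighbours of $v_i$. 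Dividing by $|x_i|$ gives $|\lambda|\le\deg^+(v_i)\le\Delta^+$. Taking the maximum over all eigenvalues then yields $\rho(\Phi)\le\Delta^+$.

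As an alternative route, I could combine Theorem~\ref{thm:horn} (with $N=A(D)=|A(\Phi)|$) with the classical bound $\rho(A(D))\le\Delta^+$ for nonnegative matrices. That route, however, would still require the row-sum bound for $A(D)$, so the direct eigenvector argument above is cleaner. It also needs no strong connectivity, which matches the statement being made for any $\Phi$.

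There is no real obstacle here. The only point needing care is the choice of a coordinate of maximal modulus, which is possible because $x\ne0$. One should also note that the gains contribute only through $|\vp|=1$, which is why the bound is independent of $\vp$.
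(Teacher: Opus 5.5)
Your proposal is correct and is essentially the paper's proof. You take a coordinate $x_i$ of maximal modulus in an eigenvector, apply the triangle inequality using $|\vp|=1$ on the out-neighbours of $v_i$, and obtain $|\lambda|\le\deg^+(v_i)\le\Delta^+$, exactly as the paper does. Your framing in terms of $\|A(\Phi)\|_\infty$ and the alternative route via Theorem~\ref{thm:horn} are optional additions that the argument does not need.
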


  \begin{proof}
  Let $\lambda$ be an eigenvalue with eigenvector $x \neq 0$.
  Let $v_p$ be the vertex such that $|x_p| = \max_i|x_i|$.
  From $A(\Phi)x = \lambda x$:
  \[
    |\lambda|\,|x_p|
    = \left|\sum_{j:(v_p,v_j)\in \mathcal{A}}\vp(v_p,v_j)\,x_j\right|
    \leq \sum_{(v_p,v_j)\in \mathcal{A}}|x_j|
    \leq \deg^+(v_p)|x_p|
    \leq \Delta^+|x_p|,
  \]
  so $|\lambda|\leq\Delta^+$. 
  \end{proof}
  
As an immediate consequence of Theorem \ref{them:upp} and Theorem \ref{thm:rho-comparison} we get the following corollary.

  \begin{corollary}
\label{cor:geom-bound-equality}
Let $\Phi=(D,\varphi)$ be a $\TT$-gain digraph whose underlying digraph $D$ is strongly connected. Then, $\rho(\Phi)=\Delta^+$ if and only if $D$ is $\Delta^+$-out-regular and $\Phi$ is $\mu$-balanced for some $\mu\in\TT$.
\end{corollary}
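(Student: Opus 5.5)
We will combine Theorem~\ref{thm:rho-comparison} with the analogous fact for the nonnegative matrix $A(D)$. The claim then reduces to showing that, for strongly connected $D$, we have $\rho(D)=\Delta^+$ if and only if $D$ is $\Delta^+$-out-regular. We get the chain $\rho(\Phi)\le\rho(D)\le\Delta^+$: the first inequality is Theorem~\ref{thm:rho-comparison}, and the second is Theorem~\ref{them:upp} applied to $(D,\mathbf 1)$. Hence $\rho(\Phi)=\Delta^+$ holds exactly when both inequalities are equalities. By Theorem~\ref{thm:rho-comparison}, equality in the first one is equivalent to $\mu$-balance for some $\mu\in\TT$.

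For the second equality, first suppose $D$ is $\Delta^+$-out-regular. Then $A(D)\mathbf 1=\Delta^+\mathbf 1$, so $\Delta^+$ is an eigenvalue and $\rho(D)\ge\Delta^+$. Combined with Theorem~\ref{them:upp}, this gives $\rho(D)=\Delta^+$. This settles the ``if'' direction: out-regularity together with $\mu$-balance gives $\rho(\Phi)=\rho(D)=\Delta^+$.

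For the ``only if'' direction, suppose $\rho(D)=\Delta^+$. By Theorems~\ref{b} and~\ref{a}, $A(D)$ is irreducible and has a positive Perron vector $x$ with $A(D)x=\Delta^+x$. Let $p$ be an index with $x_p=\max_i x_i$. Then
\[
\Delta^+x_p=\sum_{(v_p,v_j)\in\mathcal A}x_j\le \deg^+(v_p)\,x_p\le\Delta^+x_p ,
\]
so equality holds throughout. Thus $\deg^+(v_p)=\Delta^+$, and $x_j=x_p$ for every out-neighbour $v_j$ of $v_p$. Each such $v_j$ again attains the maximum, so the same argument applies at $v_j$. By strong connectivity every vertex is reached from $v_p$ along directed paths. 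Hence $x$ is constant and every vertex has out-degree $\Delta^+$.

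The main point needing care is this propagation step: equality in the triangle and degree estimate must be transported along out-neighbours, and strong connectivity is what guarantees that all vertices are reached. Positivity of the Perron vector ensures that the maximum is attained at a genuine positive value, so the inequalities are meaningful. Once out-regularity is established, the proof is complete.
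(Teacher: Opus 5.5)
Your proof is correct and follows the paper's route: the chain $\rho(\Phi)\le\rho(D)\le\Delta^+$, with Theorem~\ref{thm:rho-comparison} characterizing equality in the first inequality and the fact that a strongly connected $D$ has $\rho(D)=\Delta^+$ exactly when it is $\Delta^+$-out-regular handling the second. The paper takes that last fact for granted, while your Perron-vector propagation argument proves it.
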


%\begin{proof}
%The result follows immediately from Theorem~\ref{thm:rho-comparison} together with the fact that, for a strongly connected digraph $D$, $\rho(D)=\Delta^+$ if and only if $D$ is $\Delta^+$-out-regular.
%\end{proof}

We conclude this section by establishing a lower bound for the spectral radius in terms of the number of digons when opposite arcs carry conjugate gains.

\begin{theorem}\label{thm:lower-digon}
Let $\Phi=(D,\vp)$ be a $\TT$-gain digraph in which every digon carries conjugate gains. Then
\[
  \rho(\Phi)\ \ge\ \sqrt{\frac{2q}{n}},
\]
 where $q$ is the number of digons of $D$.
\end{theorem}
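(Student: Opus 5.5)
We would prove the bound through the trace of $A(\Phi)^2$. Write $A=A(\Phi)$ with eigenvalues $\lambda_1,\dots,\lambda_n$. Since $\tr(A^2)=\sum_{j=1}^n\lambda_j^2$, the triangle inequality gives
\[
  |\tr(A^2)| \le \sum_{j=1}^n|\lambda_j|^2 \le n\,\rho(\Phi)^2 .
\]
So it is enough to show that $\tr(A^2)=2q$. The bound $\rho(\Phi)\ge\sqrt{2q/n}$ then follows at once.

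To evaluate $\tr(A^2)$, we use the walk interpretation already used in the proof of Theorem~\ref{thm:main1}. The entry $(A^2)_{ii}$ equals $\sum_j A_{ij}A_{ji}$. A term is nonzero exactly when both arcs $(v_i,v_j)$ and $(v_j,v_i)$ are present, that is, when $\{v_i,v_j\}$ is a digon. In that case the term is $\vp(v_i,v_j)\vp(v_j,v_i)$. By hypothesis this equals $\vp(v_i,v_j)\overline{\vp(v_i,v_j)}=|\vp(v_i,v_j)|^2=1$, as in the remark following Theorem~\ref{thm:rho-comparison}. So every closed walk of length $2$ has gain $1$. Each digon contributes exactly two such walks, one starting at each endpoint. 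Hence $\tr(A^2)=2q$, which is a nonnegative real number, and therefore $2q\le n\rho(\Phi)^2$.

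There is no real obstacle in this argument. The only point needing care is that $A$ is not Hermitian, so we cannot use $\tr(A^2)=\sum|\lambda_j|^2$. Only the inequality $|\sum\lambda_j^2|\le\sum|\lambda_j|^2$ is available. This works here because the conjugate-gain hypothesis makes every digon contribute $+1$ to $\tr(A^2)$, so no contributions cancel.
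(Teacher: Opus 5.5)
Your proof is correct and takes the same route as the paper. The paper likewise bounds $\bigl|\tr(A(\Phi)^2)\bigr| = \bigl|\sum_j \lambda_j^2\bigr|$ by $\sum_j |\lambda_j|^2 \le n\,\rho(\Phi)^2$. It then uses the conjugate-gain hypothesis to get $\tr(A(\Phi)^2) = 2q$, with each digon contributing $1$ to two diagonal entries.
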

\begin{proof}
Let $\lambda_1, \lambda_2, \ldots,\lambda_n$ be the eigenvalues of $A(\Phi)$. Then
\[
\rho(\Phi)^2
=
\max_i |\lambda_i|^2
\geq
\frac1n\sum_{i=1}^n |\lambda_i|^2
\geq
\frac1n\left|\sum_{i=1}^n \lambda_i^2\right|.
\]
Since
$\sum_{i=1}^n \lambda_i^2
=
\operatorname{tr}(A(\Phi)^2)$,
and each digon contributes $1$ to two diagonal entries of $A(\Phi)^2$, we have $\operatorname{tr}(A(\Phi)^2)=2q.$
Therefore,
$\rho(\Phi)^2
\geq
\frac{2q}{n}.$ Hence, the result follows.
\end{proof}

\section{Cospectrality of Unicyclic Digraphs} \label{sec4}

Unicyclic digraphs are the simplest structures beyond directed forests, and they isolate cleanly the role played by a single cycle gain. We show that for a $\TT$-gain digraph carrying one directed cycle the entire spectrum is determined by only three data the number of vertices $n$, the cycle length $\ell$, and the single cycle gain $g=\varphi(C_\ell)$ while the gains distributed along the tree arcs are spectrally invisible.

We begin with the spectral characterization of a directed cycle. The following theorem shows that its spectrum is determined entirely by the length of the cycle and its total gain, with the individual arc gains otherwise having no spectral effect.

\begin{theorem}\label{thm:cycle-spectrum}
Let $\Phi=(C_n,\varphi)$ be a $\mathbb{T}$-gain digraph on the directed cycle $v_1\to v_2\to\cdots\to v_n\to v_1$. If $g=\varphi(C_n)=\prod_{i=1}^{n}\varphi(v_i,v_{i+1})$, then $\operatorname{spec}(A(\Phi)) = \{\lambda\in\mathbb{C}:\lambda^n=g\}$ and $\rho(\Phi)=1$.
\end{theorem}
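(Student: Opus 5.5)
The plan is to compute the characteristic polynomial directly via Corollary~\ref{cor:sachs-charpoly}. The only elementary subdigraphs of the directed cycle $C_n$ are the empty one and $C_n$ itself. For $n\ge 2$ the digraph is loopless, and $C_n$ contains no smaller directed cycles. Hence $\mathcal{E}_k(C_n)=\emptyset$ for $1\le k\le n-1$ and $\mathcal{E}_n(C_n)=\{C_n\}$, with $c=1$ and gain $g$. This gives $a_0=1$, $a_k=0$ for $0<k<n$, and $a_n=-g$. Therefore
\[
p_A(\Phi,\lambda)=\lambda^n+(-1)^n(-g)=\lambda^n-(-1)^n g .
\]

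This sign needs to be checked carefully against the direct determinant. Expanding $\det(\lambda I-A(\Phi))$ along the first column, the matrix is $\lambda I$ minus a weighted cyclic shift. The permutation term for the $n$-cycle has sign $(-1)^{n-1}$ and product $\prod(-\varphi)=(-1)^n g$, contributing $-g$. So $p_A(\Phi,\lambda)=\lambda^n-g$, and I will reconcile the corollary's sign convention with this. Either way, the direct route is safest: use Theorem~\ref{thm:sachs-det} applied to the matrix $\lambda I - A(\Phi)$, or simply the Leibniz expansion, where only the identity and the full cycle permutation contribute.

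An alternative and cleaner route is switching. Choose $\sigma(v_1)=1$ and $\sigma(v_{k+1})=\sigma(v_k)\varphi(v_k,v_{k+1})\overline{\mu}$, where $\mu$ is any fixed $n$th root of $g$. Consistency around the cycle holds exactly because $\mu^n=g$. Then $\Phi^\sigma=(C_n,\mu\cdot\mathbf 1)$, which is the content of Lemma~\ref{lem:mubalance}, since $C_n$ is strongly connected and trivially $\mu$-balanced. It follows that $A(\Phi)$ is similar to $\mu P$, where $P$ is the cyclic permutation matrix whose spectrum is the $n$th roots of unity. Hence $\spec(A(\Phi))=\{\mu\omega^j\}=\{\lambda:\lambda^n=g\}$, each with multiplicity one.

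Finally, every such $\lambda$ has $|\lambda|=|g|^{1/n}=1$, so $\rho(\Phi)=1$. This is also consistent with Theorem~\ref{thm:rho-comparison}, since $\rho(C_n)=1$. The only delicate point is the sign bookkeeping in the first route, which is why I would present the switching argument as the main proof.
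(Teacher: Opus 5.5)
Your switching argument is correct and is essentially the paper's proof. The paper switches by the path gains $s_k=\prod_{j<k}\varphi(v_j,v_{j+1})$, which pushes all of the gain onto the arc $(v_n,v_1)$ and gives $\det(\lambda I-B)=\lambda^n-g$ directly; you additionally rescale by powers of $\overline{\mu}$ to put gain $\mu$ on every arc (the normal form of Lemma~\ref{lem:mubalance}) and then use the spectrum of the cyclic permutation matrix.

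Your sign worry in the first route is justified, and your Leibniz computation is the correct one. Corollary~\ref{cor:sachs-charpoly} as printed carries a spurious factor $(-1)^k$: the coefficient of $\lambda^{n-k}$ in $\det(\lambda I-A(\Phi))$ is exactly $\sum_{\mathsf{E}\in\mathcal{E}_k(D)}(-1)^{c(\mathsf{E})}\varphi(\mathsf{E})$. For the cycle this yields $\lambda^n-g$ rather than $\lambda^n-(-1)^n g$.
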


\begin{proof}
Let $S=\operatorname{diag}(s_1,s_2, \ldots,s_n)$ 
be the unitary matrix defined by $s_1=1$ and $s_k=\prod_{j=1}^{k-1}\varphi(v_j,v_{j+1})$ for $k\ge 2$. Under the switching similarity transformation $B=SA(\Phi)S^{-1}$, the entries become $B_{k,k+1} = s_k\varphi(v_k,v_{k+1})s_{k+1}^{-1} = 1$ for $1\le k < n$, and $B_{n1} = s_n\varphi(v_n,v_1) = g$. Thus, 
\[
B=
\begin{pmatrix}
0&1&0&\cdots&0\\
0&0&1&\cdots&0\\
\vdots&&&\ddots&\vdots\\
0&0&0&\cdots&1\\
g&0&0&\cdots&0
\end{pmatrix}.
\] 
Hence we get the characteristic polynomial as $\det(\lambda I-B)=\lambda^n-g$. Since $A(\Phi)$ and $B$ are similar, the eigenvalues of $A(\Phi)$ are the roots of $\lambda^n=g$. Because $|g|=1$, every eigenvalue satisfies $|\lambda|=1$, yielding $\rho(\Phi)=1$.This completes the proof.
\end{proof}

The Theorem \ref{thm:cycle-spectrum} gives a characterization of cospectrality for $\TT$-gain directed cycles in terms of their total cycle gains.

\begin{corollary}
\label{cor:cycle-cospectral}
Let $\Phi_1=(C_n,\varphi_1)$ and $\Phi_2=(C_n,\varphi_2)$ be two
$\TT$-gain digraphs. Then $\Phi_1$ and $\Phi_2$ are cospectral if and
only if
\[
\varphi_1(C_n)=\varphi_2(C_n).
\]
\end{corollary}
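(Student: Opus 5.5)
The plan is to read off both directions from Theorem~\ref{thm:cycle-spectrum}, which gives the spectrum of a $\TT$-gain directed cycle in closed form. For $m=1,2$ it says the characteristic polynomial of $A(\Phi_m)$ is
\[
p_{A(\Phi_m)}(\lambda)=\lambda^n-\varphi_m(C_n),
\]
because $A(\Phi_m)$ is similar, via the diagonal switching matrix $S$ built in that proof, to the companion-type matrix $B$ with last-row entry $\varphi_m(C_n)$. Cospectrality is defined as equality of characteristic polynomials, so the corollary reduces to comparing these two polynomials.

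For sufficiency, suppose $\varphi_1(C_n)=\varphi_2(C_n)=g$. Then both characteristic polynomials equal $\lambda^n-g$. Hence $\Phi_1$ and $\Phi_2$ are cospectral, and both spectra are the set $\{\lambda:\lambda^n=g\}$ of $n$ distinct $n$th roots of $g$, each with multiplicity one.

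For necessity, suppose $A(\Phi_1)$ and $A(\Phi_2)$ have the same characteristic polynomial. Then
\[
\lambda^n-\varphi_1(C_n)=\lambda^n-\varphi_2(C_n),
\]
and comparing constant terms gives $\varphi_1(C_n)=\varphi_2(C_n)$. If one instead reads cospectrality as equality of spectra as multisets, the same conclusion follows by taking the product of the eigenvalues. That product equals $(-1)^{n+1}\det A(\Phi_m)=(-1)^{n+1}\cdot(-1)^n(-\varphi_m(C_n))=\varphi_m(C_n)$, so equal multisets again force equal cycle gains. As a third route, one can pick any eigenvalue $\lambda$ common to both spectra; then $\varphi_1(C_n)=\lambda^n=\varphi_2(C_n)$.

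There is no genuine obstacle here. The only point needing care is to state which notion of cospectrality is used. The paper defines it through the characteristic polynomial, so the constant-term comparison is the cleanest argument.
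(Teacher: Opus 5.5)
This is essentially the paper's argument: the corollary is read off directly from the characteristic polynomial $\lambda^n-\varphi(C_n)$ obtained in Theorem~\ref{thm:cycle-spectrum}, and your constant-term comparison is exactly that. One small slip in your multiset aside: the product of the eigenvalues is $\det A(\Phi_m)=(-1)^{n+1}\varphi_m(C_n)$, not $\varphi_m(C_n)$. It still determines the gain, so your conclusion is unaffected.
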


\begin{definition}
A digraph $D$ on $n$ vertices is called \emph{unicyclic} if it is weakly connected and its underlying undirected graph is unicyclic. 

Throughout this section, we consider unicyclic digraphs whose unique cycle is directed. Thus every remaining arc belongs to a directed tree whose arcs are oriented toward the cycle.
\end{definition}

The spectral structure of a unicyclic digraph is governed by its unique directed cycle, while the attached directed trees contribute only zero eigenvalues.

\begin{theorem}
Let $\Phi=(D,\varphi)$ be a $\TT$-gain unicyclic digraph on $n$ vertices whose unique directed cycle is $C_\ell$, and let
$g=\varphi(C_\ell)\in\TT$.
Then
\[
\operatorname{spec}(A(\Phi))
=
\{0^{\,n-\ell}\}
\cup
\{\lambda\in\mathbb C:\lambda^\ell=g\},
\]
where $0^{\,n-\ell}$ denotes the eigenvalue $0$ with multiplicity
$n-\ell$. Consequently, $\rho(\Phi)=1.$
\end{theorem}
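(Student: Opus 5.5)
The plan is to compute the characteristic polynomial through Corollary~\ref{cor:sachs-charpoly}, by showing that the only nonempty elementary subdigraph of $D$ is the cycle $C_\ell$ itself. Every elementary subdigraph is a vertex-disjoint union of directed cycles of $D$. Since the underlying undirected graph of $D$ is unicyclic, any directed cycle of $D$ must traverse the unique undirected cycle. A directed cycle of length $2$ (a digon) would itself create a second cycle in the underlying multigraph; by the standing convention, only arcs of $C_\ell$ lie on a cycle, and the tree arcs are oriented toward the cycle, so no tree arc lies on any directed cycle. Hence $C_\ell$ is the unique directed cycle of $D$. If $\ell=2$, the digon is counted as the unique cycle.

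With this in hand, $\mathcal{E}_k(D)=\emptyset$ for all $k\notin\{0,\ell\}$, and $\mathcal{E}_\ell(D)=\{C_\ell\}$ with $c=1$ and gain $g$. Corollary~\ref{cor:sachs-charpoly} then gives $a_k(\Phi)=0$ for $k\ne 0,\ell$, and $a_\ell(\Phi)=-g$. Therefore
\[
p_A(\Phi,\lambda)=\lambda^n+(-1)^\ell(-g)\lambda^{n-\ell}.
\]
The sign needs care here. The cleanest route is to cross-check with Theorem~\ref{thm:cycle-spectrum} in the case $n=\ell$, where the polynomial must equal $\lambda^\ell-g$. This shows that the coefficient convention yields $\lambda^{n-\ell}(\lambda^\ell-g)$. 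Alternatively, compute directly from the permutation expansion: the $\ell$-cycle has sign $(-1)^{\ell-1}$, and each entry of $\lambda I-A$ contributes a factor $-\varphi$, giving $(-1)^{\ell-1}(-1)^\ell g=-g$.

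An independent check is also available. Order the vertices so that the tree vertices come in a sequence in which each vertex's unique out-arc points to a vertex already listed or to a cycle vertex; the tree vertices have out-degree $1$ toward the cycle. Under this ordering $A(\Phi)$ is block triangular, with the cycle block equal to $A(C_\ell,\varphi|)$ and the tree block strictly triangular (nilpotent). This gives the same factorization. Either way, the roots are $0$ with multiplicity $n-\ell$ together with the $\ell$ distinct roots of $\lambda^\ell=g$.

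Finally, $\rho(\Phi)=1$ because $|g|=1$ forces every nonzero eigenvalue to have modulus $1$, and $\ell\ge 2$ guarantees that such eigenvalues exist. The main obstacle is the structural claim that $C_\ell$ is the only directed cycle. I would argue it from the orientation of the trees: each non-cycle vertex has a directed path into the cycle and no arc back out of the cycle to it, so no closed walk can pass through it.
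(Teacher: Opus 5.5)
Your proof is correct, and your primary route differs from the paper's. The paper's argument is exactly your ``independent check'': order the non-cycle vertices first, note that $A(\Phi)$ is block upper triangular with a strictly triangular (nilpotent) tree block and the gain adjacency matrix of $C_\ell$ as the other diagonal block, and read off the spectrum from Theorem~\ref{thm:cycle-spectrum}. Your main argument instead expands $\det(\lambda I-A(\Phi))$ over permutations and uses only the hypothesis that $C_\ell$ is the \emph{unique} directed cycle. The only permutations with nonzero product are then the identity and the rotation along $C_\ell$, which gives $\lambda^n-g\lambda^{n-\ell}$.

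Your route buys generality and an explicit characteristic polynomial. It never uses how the tree arcs are oriented or any vertex ordering, so it holds verbatim for every $\TT$-gain digraph with exactly one directed cycle. The paper's proof as written relies on the convention that tree arcs point toward the cycle, which is what gives the zero lower-left block and the triangular tree block. In exchange, the paper's route is sign-free and simply reuses the cycle theorem.

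Your structural paragraph deriving uniqueness of the directed cycle is superfluous, since uniqueness is part of the statement. It is a genuine hypothesis, not a consequence of the underlying simple graph being unicyclic: the digraph of Example~\ref{ex:running} has a triangle as underlying simple graph but two directed cycles.

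Do clean up the sign step. The display $\lambda^n+(-1)^\ell(-g)\lambda^{n-\ell}$, obtained by plugging into Corollary~\ref{cor:sachs-charpoly} as printed, is wrong for odd $\ell$. Agreement in the single case $n=\ell$ cannot fix a general coefficient formula. The fault lies in the corollary itself. A permutation whose nontrivial cycles form $\mathsf{E}\in\mathcal{E}_k(D)$ contributes $(-1)^{k-c(\mathsf{E})}(-1)^{k}\varphi(\mathsf{E})\lambda^{n-k}=(-1)^{c(\mathsf{E})}\varphi(\mathsf{E})\lambda^{n-k}$. So the coefficient of $\lambda^{n-k}$ is $a_k(\Phi)$ itself, with no extra factor $(-1)^k$. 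In Example~\ref{ex:running}, for instance, the constant term is $-\det A(\Phi)=-\mathrm{i}\omega=a_3(\Phi)$, not $-a_3(\Phi)$. Your direct computation $(-1)^{\ell-1}(-1)^{\ell}g=-g$ is the valid justification. Present that and drop the appeal to the corollary's sign convention.
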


\begin{proof}
Order the vertices which are not lying on the directed cycle first (from the tips toward the cycle), followed by the vertices of the directed cycle.
With this ordering,
\[
A(\Phi)=
\begin{pmatrix}
N&R\\
0&C(\Phi)
\end{pmatrix},
\]
where $N$ is a strictly upper triangular matrix corresponding to the arcs outside the directed circle, $R$ contains the arcs joining the tails to the cycle, and $C(\Phi)$ is the gain adjacency matrix of the directed cycle
$C_\ell$.

Since $N$ is strictly upper triangular, it is nilpotent. Hence all
eigenvalues of $N$ are equal to $0$, with algebraic multiplicity
$n-\ell$. Since $A(\Phi)$ is block upper triangular, its spectrum is the union of the spectra of its diagonal blocks. Therefore,
$\operatorname{spec}(A(\Phi))
=
\operatorname{spec}(N)
\cup
\operatorname{spec}(C(\Phi))$.
By Theorem~\ref{thm:cycle-spectrum},
$\operatorname{spec}(C(\Phi))
=
\{\lambda\in\mathbb C:\lambda^\ell=g\}.$
Thus
\[
\operatorname{spec}(A(\Phi))
=
\{0^{\,n-\ell}\}
\cup
\{\lambda\in\mathbb C:\lambda^\ell=g\}.
\]
Finally, since $g\in\TT$, every solution of $\lambda^\ell=g$ has modulus one. Hence every nonzero eigenvalue of $A(\Phi)$ has modulus one, while the remaining eigenvalues are zero. Therefore,
$\rho(\Phi)=1.$
\end{proof}

As a consequence, the spectrum of a $\TT$-gain unicyclic digraph is completely determined by the number of vertices, the length of its unique directed cycle, and the total gain of that cycle.

\begin{corollary}
\label{cor:unicyclic-cospectral}
Let $\Phi_1=(D_1,\varphi_1)$ and $\Phi_2=(D_2,\varphi_2)$ be two
$\TT$-gain unicyclic digraphs with unique directed cycles of lengths
$\ell_1$ and $\ell_2$, respectively. Then $\Phi_1$ and $\Phi_2$ are
cospectral if and only if
\[
|V(D_1)|=|V(D_2)|,\qquad
\ell_1=\ell_2,
\qquad\text{and}\qquad
\varphi_1(C_{\ell_1})=\varphi_2(C_{\ell_2}).
\]
\end{corollary}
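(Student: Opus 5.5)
We plan to derive Corollary~\ref{cor:unicyclic-cospectral} directly from the preceding theorem on unicyclic $\TT$-gain digraphs. By that theorem, for $t=1,2$,
\[
\operatorname{spec}(A(\Phi_t))=\{0^{\,n_t-\ell_t}\}\cup\{\lambda\in\CC:\lambda^{\ell_t}=g_t\},
\]
where $n_t=|V(D_t)|$ and $g_t=\varphi_t(C_{\ell_t})$. Passing to characteristic polynomials, this reads
\[
p_{A(\Phi_t)}(\lambda)=\lambda^{\,n_t-\ell_t}\bigl(\lambda^{\ell_t}-g_t\bigr)=\lambda^{n_t}-g_t\lambda^{\,n_t-\ell_t}.
\]
The whole argument will be a comparison of these two polynomials.

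The \emph{sufficiency} direction is immediate. If $n_1=n_2$, $\ell_1=\ell_2$ and $g_1=g_2$, then the two displayed polynomials coincide, so $\Phi_1$ and $\Phi_2$ are cospectral.

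For \emph{necessity}, assume $p_{A(\Phi_1)}=p_{A(\Phi_2)}$. First, the degrees agree, so $n_1=n_2=:n$. Next we compare monomials. Each polynomial is $\lambda^n$ plus a single further monomial $-g_t\lambda^{n-\ell_t}$. Here $g_t\neq0$ since $g_t\in\TT$, and $n-\ell_t<n$ because $\ell_t\ge2$ (loops are excluded, so every directed cycle has length at least $2$). Thus $n-\ell_t$ is the unique exponent below $n$ carrying a nonzero coefficient. Equality of the polynomials then forces $n-\ell_1=n-\ell_2$, and comparing the coefficients at that exponent gives $g_1=g_2$.

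There is no substantial obstacle. The only points needing care are the multiset bookkeeping and the observation that $\ell_t<n_t$ is not required, since $\ell_t=n_t$ (a pure cycle) is also covered by the same polynomial. A spectral alternative would be to count the multiplicity of $0$ to recover $n-\ell$ and read off $g$ as $\lambda^\ell$ for any nonzero eigenvalue, but the polynomial comparison avoids the multiset subtleties.
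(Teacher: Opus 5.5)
Your proof is correct and follows the paper's route. The paper states this corollary as an immediate consequence of the preceding spectral theorem for $\TT$-gain unicyclic digraphs, and your comparison of the characteristic polynomials $\lambda^{n_t}-g_t\lambda^{n_t-\ell_t}$ (with $g_t\neq 0$ and $\ell_t\ge 2$ ensuring a unique lower-order monomial) is exactly the routine verification it leaves implicit.
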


\section{Laplacian matrix of $\TT$-gain Digraph} \label{sec5}

The Laplacian and signless Laplacian matrices of a $\TT$-gain digraph
$\Phi=(D,\varphi)$ are defined by
\[
L(\Phi)=D^{+}-A(\Phi), \qquad
Q(\Phi)=D^{+}+A(\Phi),
\]
where $D^{+}=\operatorname{diag}(\deg^{+}(v_1),\dots,\deg^{+}(v_n))$
is the diagonal matrix of out-degrees. Since the gains
$\varphi(v_i,v_j)$ and $\varphi(v_j,v_i)$ are unrelated, both
$L(\Phi)$ and $Q(\Phi)$ are, in general, non-normal, and their spectra lie in $\mathbb{C}$. Consequently, the variational and majorization techniques available for the Hermitian Laplacian of a gain graph are no longer applicable. We discuss the combinatorial interpretation of determinant and characteristic polynomial for both $L(\Phi)$ and $Q(\Phi)$. Moreover, we discuss the cycle balance criteria in terms of the eigenvalues for both the matrices and  the spectra of both matrices are shown to lie in the closed right half-plane.

\begin{theorem}\label{thm:coates}
Let $\Phi=(D,\varphi)$ be a $\mathbb{T}$-gain digraph on $n$ vertices. Then
\[
\det L(\Phi)
=
\sum_{\mathsf{E}\in\mathcal{E}(D)}
(-1)^{c(\mathsf{E})}
\varphi(\mathsf{E})
\prod_{v\notin V(\mathsf{E})}
\deg^{+}(v),
\]
and
\[
\det Q(\Phi)
=
\sum_{\mathsf{E}\in\mathcal{E}(D)}
(-1)^{|V(\mathsf{E})|+c(\mathsf{E})}
\varphi(\mathsf{E})
\prod_{v\notin V(\mathsf{E})}
\deg^{+}(v),
\]
where the summation is over all elementary subdigraphs of $D$.
\end{theorem}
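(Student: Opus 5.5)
We expand $\det L(\Phi)$ by the Leibniz formula, in the same way as in the proof of Theorem~\ref{thm:sachs-det}, but now keeping track of the diagonal. Write $L(\Phi)_{ii}=\deg^{+}(v_i)$ and $L(\Phi)_{ij}=-\varphi(v_i,v_j)$ when $(v_i,v_j)\in\mathcal{A}$ and $i\neq j$, with $0$ otherwise. Since $D$ is loopless, $A(\Phi)$ has zero diagonal, so the diagonal entries of $L(\Phi)$ come only from $D^{+}$.

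Take $\pi\in S_n$ with a nonzero term $\operatorname{sgn}(\pi)\prod_i L(\Phi)_{i,\pi(i)}$. Let $F=\{i:\pi(i)=i\}$ be its fixed points. The nontrivial cycles of $\pi$ live on $V\setminus F$, and every pair $(i,\pi(i))$ with $i\notin F$ must be an arc. Hence these cycles form an elementary subdigraph $\mathsf{E}$ with $V(\mathsf{E})=V\setminus F$. Note that a $2$-cycle of $\pi$ needs both arcs, so it is a digon, which is allowed as a directed cycle.

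This gives a bijection between permutations with nonzero terms (more precisely, all permutations whose non-fixed part uses arcs) and elementary subdigraphs $\mathsf{E}\in\mathcal{E}(D)$, including the empty one, which corresponds to the identity. The empty subdigraph contributes $\prod_v\deg^{+}(v)$, with convention $\varphi(\emptyset)=1$ and $c(\emptyset)=0$. If some $\deg^{+}(v)=0$, the term vanishes on both sides, so it is harmless to include it in the sum.

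For the term of $\pi$ attached to $\mathsf{E}$, we compute three factors:
\begin{itemize}
\item The fixed points contribute $\prod_{v\notin V(\mathsf{E})}\deg^{+}(v)$.
\item The off-diagonal entries contribute $\prod_{i\in V(\mathsf{E})}(-\varphi(v_i,v_{\pi(i)}))=(-1)^{|V(\mathsf{E})|}\varphi(\mathsf{E})$.
\item The sign is $\operatorname{sgn}(\pi)=(-1)^{n-(c(\mathsf{E})+n-|V(\mathsf{E})|)}=(-1)^{|V(\mathsf{E})|-c(\mathsf{E})}$, because $\pi$ has $c(\mathsf{E})$ nontrivial cycles plus $n-|V(\mathsf{E})|$ fixed points.
\end{itemize}
Multiplying, the total sign is $(-1)^{2|V(\mathsf{E})|-c(\mathsf{E})}=(-1)^{c(\mathsf{E})}$, which gives the first formula.

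For $Q(\Phi)$ the argument is identical, except that the off-diagonal entries are $+\varphi$ and lose the factor $(-1)^{|V(\mathsf{E})|}$. The total sign is then $(-1)^{|V(\mathsf{E})|-c(\mathsf{E})}=(-1)^{|V(\mathsf{E})|+c(\mathsf{E})}$, as claimed.

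There is no real obstacle here. The only points needing care are the sign bookkeeping with fixed points counted as $1$-cycles, and the convention for the empty elementary subdigraph. An alternative would be multilinear expansion of $\det(D^{+}-A(\Phi))$ over subsets $S$ of rows taken from $A$, using Theorem~\ref{thm:sachs-det} on principal submatrices. We prefer the direct permutation route, since it mirrors the proof of Theorem~\ref{thm:sachs-det}.
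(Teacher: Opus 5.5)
Your proof is correct and takes the same route as the paper. Both use a Leibniz expansion in which the fixed points of $\pi$ give the out-degree factors and the nontrivial cycles form an elementary subdigraph $\mathsf{E}$. The sign $\operatorname{sgn}(\pi)=(-1)^{|V(\mathsf{E})|-c(\mathsf{E})}$ then combines with $(-1)^{|V(\mathsf{E})|}$ from the off-diagonal entries of $L(\Phi)$, and with no extra factor for $Q(\Phi)$. Your explicit sign bookkeeping and your handling of the empty elementary subdigraph (the identity permutation) supply details that the paper's one-line proof leaves implicit.
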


\begin{proof}
The proof is similar to that of Theorem \ref{thm:sachs-det}. Thus, each nonzero permutation corresponds uniquely to an elementary subdigraph of $D$, fixed points contribute the diagonal degree terms, directed cycles contribute their gain products, and the resulting signs simplify to $(-1)^{c(\mathsf{E})}$ for $L(\Phi)$ and $(-1)^{|V(\mathsf{E})|+c(\mathsf{E})}$ for $Q(\Phi)$.
\end{proof}

The determinant expansions above extend naturally to the characteristic polynomials of the Laplacian and signless Laplacian matrices.

\begin{corollary}\label{cor:coates}
Let $\Phi=(D,\varphi)$ be a $\mathbb{T}$-gain digraph on $n$ vertices. Then
\[
p_L^{}(\Phi,\lambda)
=
\det(\lambda I-L(\Phi))
=
\sum_{\mathsf{E}\in\mathcal{E}(D)}
(-1)^{|V(\mathsf{E})|-c(\mathsf{E})}
\varphi(\mathsf{E})
\prod_{v\notin V(\mathsf{E})}
\bigl(\lambda-\deg^{+}(v)\bigr),
\]
and
\[
p_Q^{}(\Phi,\lambda)
=
\det(\lambda I-Q(\Phi))
=
\sum_{\mathsf{E}\in\mathcal{E}(D)}
(-1)^{c(\mathsf{E})}
\varphi(\mathsf{E})
\prod_{v\notin V(\mathsf{E})}
\bigl(\lambda-\deg^{+}(v)\bigr).
\]
\end{corollary}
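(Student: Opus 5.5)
The plan is to expand $\det(\lambda I-L(\Phi))$ and $\det(\lambda I-Q(\Phi))$ directly by the Leibniz formula, as in the proof of Theorem~\ref{thm:sachs-det}, rather than deriving them from Theorem~\ref{thm:coates}. Write
\[
\lambda I-L(\Phi)=(\lambda I-D^{+})+A(\Phi),\qquad \lambda I-Q(\Phi)=(\lambda I-D^{+})-A(\Phi).
\]
Since $D$ is loopless, $A(\Phi)$ has zero diagonal. Hence the diagonal entries of both matrices are $\lambda-\deg^{+}(v_i)$. The off-diagonal $(i,j)$ entries are $\varphi(v_i,v_j)$ for $\lambda I-L(\Phi)$ and $-\varphi(v_i,v_j)$ for $\lambda I-Q(\Phi)$ when $(v_i,v_j)\in\mathcal{A}$, and $0$ otherwise.

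Next I classify the permutations. For $\pi\in S_n$, split its cycle decomposition into fixed points and nontrivial cycles of length at least $2$. The term $\operatorname{sgn}(\pi)\prod_i M_{i,\pi(i)}$ is nonzero only if every nontrivial cycle of $\pi$ uses arcs of $D$. In that case the nontrivial cycles form an elementary subdigraph $\mathsf{E}\in\mathcal{E}(D)$, possibly empty. Conversely, each $\mathsf{E}\in\mathcal{E}(D)$ arises from exactly one such $\pi$: the one that follows the cycles of $\mathsf{E}$ and fixes every vertex outside $V(\mathsf{E})$. This gives a bijection between the surviving permutations and $\mathcal{E}(D)$. Under it, the fixed points contribute $\prod_{v\notin V(\mathsf{E})}(\lambda-\deg^{+}(v))$. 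The empty subdigraph gives the leading term $\prod_{v}(\lambda-\deg^{+}(v))$, with $\varphi(\emptyset)=1$ and $c(\emptyset)=0$.

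The step needing care is the sign bookkeeping. Fixed points are cycles of $\pi$ but are not counted in $c(\mathsf{E})$. Since $\operatorname{sgn}(\pi)=\prod(-1)^{\text{length}-1}$ over the cycles of $\pi$, and fixed points contribute $(-1)^0$, we get $\operatorname{sgn}(\pi)=(-1)^{|V(\mathsf{E})|-c(\mathsf{E})}$. For $\lambda I-L(\Phi)$, the arc entries along the cycles multiply to exactly $\varphi(\mathsf{E})$, which yields $p_L$ as stated. For $\lambda I-Q(\Phi)$, each of the $|V(\mathsf{E})|$ arcs used carries an extra factor $-1$. The sign therefore becomes $(-1)^{|V(\mathsf{E})|-c(\mathsf{E})+|V(\mathsf{E})|}=(-1)^{c(\mathsf{E})}$, which yields $p_Q$.

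As a check, setting $\lambda=0$ and multiplying by $(-1)^n$ recovers the two formulas of Theorem~\ref{thm:coates}. For $L(\Phi)$ the sign is $(-1)^{n+|V(\mathsf{E})|-c(\mathsf{E})+(n-|V(\mathsf{E})|)}=(-1)^{c(\mathsf{E})}$, where $(-1)^{n-|V(\mathsf{E})|}$ comes from the factors $-\deg^{+}(v)$. For $Q(\Phi)$ the sign is $(-1)^{n+c(\mathsf{E})+(n-|V(\mathsf{E})|)}=(-1)^{|V(\mathsf{E})|+c(\mathsf{E})}$. Both agree with Theorem~\ref{thm:coates}, which confirms the signs.
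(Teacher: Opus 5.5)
Your proof is correct and follows essentially the paper's route. The paper gives no separate argument for this corollary beyond noting that the permutation (Leibniz) expansion behind Theorems~\ref{thm:sachs-det} and~\ref{thm:coates} carries over with diagonal entries $\lambda-\deg^{+}(v)$, which is exactly what you carry out, with the sign $(-1)^{|V(\mathsf{E})|-c(\mathsf{E})}$, the extra factor $(-1)^{|V(\mathsf{E})|}$ for $Q(\Phi)$, and the inclusion of the empty elementary subdigraph for the leading term all handled correctly.
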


We now characterize cycle balance in terms of the Laplacian spectrum. The following theorem is the Laplacian analogue of Theorem~\ref{thm:main2}.

\begin{theorem}\label{thm:sc-balance}
Let $\Phi=(D,\varphi)$ be a $\mathbb{T}$-gain digraph whose underlying digraph
$D$ is strongly connected. Then the following are equivalent.
\begin{enumerate}[label=\textup{(\roman*)}]
\item $\Phi$ is cycle balanced.
\item $\Phi\sim(D,\mathbf1)$, i.e., $\Phi$ has a potential function.
\item $L(\Phi)$ is cospectral with $L(D)$.
\item $0\in\spec L(\Phi)$.
\end{enumerate}
\end{theorem}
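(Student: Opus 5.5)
I will prove the cycle (i)$\Rightarrow$(ii)$\Rightarrow$(iii)$\Rightarrow$(iv)$\Rightarrow$(i).

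\textbf{The easy implications.} The step (i)$\Rightarrow$(ii) is exactly the corresponding implication of Theorem~\ref{thm:main2}, since the hypotheses coincide. For (ii)$\Rightarrow$(iii), switching by $\sigma$ corresponds to $S=\operatorname{diag}(\sigma(v_i))$. Since $S$ is diagonal, it commutes with $D^{+}$, so
\[
L(\Phi^\sigma)=D^{+}-SA(\Phi)S^{-1}=S\,L(\Phi)\,S^{-1}.
\]
Hence $L(\Phi)$ is similar to $L(D)$. For (iii)$\Rightarrow$(iv), every row sum of $L(D)$ is $\deg^{+}(v_i)-\deg^{+}(v_i)=0$. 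Thus $L(D)\mathbf 1=0$, and $0$ is an eigenvalue of $L(D)$, hence also of $L(\Phi)$.

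\textbf{The main step, (iv)$\Rightarrow$(i).} Suppose $L(\Phi)x=0$ with $x\neq 0$. Then for every $i$,
\[
\deg^{+}(v_i)\,x_i=\sum_{(v_i,v_j)\in\mathcal A}\varphi(v_i,v_j)\,x_j .
\]
Choose $p$ with $|x_p|=\max_i|x_i|>0$. Strong connectivity with $n\ge 2$ gives $\deg^{+}(v_p)\ge1$. By the triangle inequality,
\[
\deg^{+}(v_p)|x_p|\le\sum_j|x_j|\le \deg^{+}(v_p)|x_p|,
\]
so equality holds throughout. Consequently every out-neighbour $v_j$ of $v_p$ satisfies $|x_j|=|x_p|$. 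Moreover, all terms $\varphi(v_p,v_j)x_j$ have the same argument, and their sum equals $\deg^{+}(v_p)x_p$. Therefore
\[
\varphi(v_p,v_j)\,x_j=x_p
\]
for every arc $(v_p,v_j)$.

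Each out-neighbour is again a maximum-modulus vertex, so the same argument propagates along arcs. Strong connectivity means every vertex is reachable from $v_p$. It follows that $|x_i|=|x_p|\neq 0$ for all $i$, and that $\varphi(v_i,v_j)x_j=x_i$ for every arc $(v_i,v_j)$.

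Now define $\psi(v_i)=\overline{x_i}/|x_i|$. The relation $\varphi(v_i,v_j)=x_i/x_j$ then becomes $\varphi(v_i,v_j)=\overline{\psi(v_i)}\,\psi(v_j)$. So $\psi$ is a potential function, and every directed cycle gain telescopes to $1$, giving (i). One can also obtain (i) directly: multiply $\varphi(v_i,v_j)=x_i/x_j$ around any cycle and the product telescopes to $1$.

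\textbf{Expected obstacle.} The delicate part is the propagation step. It needs a vertex of maximum modulus with positive out-degree, and it needs out-arcs from \emph{every} vertex to reach all others. Both are supplied by strong connectivity, with the degenerate case $n=1$ handled separately, where everything is trivial.
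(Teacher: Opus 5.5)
Your proof is correct and follows essentially the same route as the paper. The paper's key step (iv)$\Rightarrow$(ii) is the same maximum-modulus argument on a kernel vector: equality in the triangle inequality, propagation along out-arcs by strong connectivity, and the potential function $\overline{x_i}/|x_i|$. Your explicit observation that $S$ commutes with $D^{+}$, so that $L(\Phi^\sigma)=S\,L(\Phi)\,S^{-1}$, supplies the Laplacian version of (ii)$\Rightarrow$(iii), which the paper glosses over by citing the adjacency-matrix Theorem~\ref{thm:main2}.
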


\begin{proof}
The equivalence of {\rm(i)}, {\rm(ii)} and {\rm(iii)} follows directly from Theorem~\ref{thm:main2}. Since $0$ is an eigenvalue of $L(D)$, therefore {\rm(iii)} $\Rightarrow$ {\rm(iv)}. Hence, it suffices to prove
{\rm(iv) $\Rightarrow$ (ii)}.

If $0\in\spec L(\Phi)$, then there exists a non-zero vector $ y\in\ker L(\Phi)$ with
\[
\deg^{+}(v_i)y_i=\sum_{(v_i,v_j)\in \mathcal{A}}\varphi(v_i,v_j)y_j,
\qquad i=1,2, \dots,n.
\]
Let $m=\max_i|y_i|$ and $U=\{v_i:|y_i|=m\}$. If $v_i\in U$, then
\begin{equation}\label{eq2}
\deg^{+}(v_i)m
=\Big|\sum_{(v_i,v_j)\in \mathcal{A}}\varphi(v_i,v_j)y_j\Big|
\le\sum_{(v_i,v_j)\in \mathcal{A}}|y_j|
\le\deg^{+}(v_i)m,
\end{equation}
 From \eqref{eq2}, we have $|y_j|=m$ for every out-neighbour $v_j$ of $v_i$, and thus $U$ is closed under taking out-neighbours. Since $D$ is strongly connected, $U=V$, i.e.,
$|y_i|=m$ for all $1\leq i\leq n$. From \eqref{eq2}, it is easy to verify that
\[
\varphi(v_i,v_j)y_j=y_i
\qquad \forall \ (v_i,v_j)\in \mathcal{A}.
\]
Consider $\sigma(v_i)=\overline{y_i}/m$. Then it follows that, $|\sigma(v_i)|=1$, and
\(
\varphi(v_i,v_j)\overline{\sigma(v_j)}
=\overline{\sigma(v_i)},
\)
which gives us
\(
\varphi(v_i,v_j)
=\overline{\sigma(v_i)}\,\sigma(v_j).
\)
Thus $\sigma$ is a potential function, and hence
$\Phi\sim(D,\mathbf1)$.
\end{proof}

The hypothesis of strong connectivity in Theorem~\ref{thm:sc-balance} is essential. The following example shows a $\TT$-gain digraph that is not strongly connected and not cycle balanced, yet still has $0$ as a Laplacian eigenvalue.
 
\begin{example}\label{ex:weak-zero}
Consider the $\mathbb{T}$-gain digraph $\Phi=(D,\varphi)$ given in
Figure~\ref{fig:weak}. The underlying digraph is weakly connected but not strongly connected. The gain of the lower digon is $1$, whereas the gain of the upper digon is $-1$. Hence $\Phi$ is not cycle balanced.

\begin{figure}[ht]
\centering
\begin{tikzpicture}[>=stealth, node distance=2.8cm,
  every node/.style={circle, draw, minimum size=6mm, inner sep=1pt}]

\node (v3) {$v_3$};
\node (v4) [right of=v3] {$v_4$};
\node (v1) [below of=v3] {$v_1$};
\node (v2) [right of=v1] {$v_2$};

% Top digon
\draw[->] (v3) to[bend left=15] node[draw=none, above] {$1$} (v4);
\draw[->] (v4) to[bend left=15] node[draw=none, below] {$-1$} (v3);

% Bottom digon
\draw[->] (v1) to[bend left=15] node[draw=none, above] {$1$} (v2);
\draw[->] (v2) to[bend left=15] node[draw=none, below] {$1$} (v1);

% Connecting arc
\draw[->] (v3) to node[draw=none, left] {$1$} (v1);

\end{tikzpicture}
\caption{A $\mathbb{T}$-gain digraph which is not strongly connected with $0$ eigenvalue.}
\label{fig:weak}
\end{figure}

It is easy to check that $0\in\spec L(\Phi)$ although $\Phi$ is not cycle balanced. This shows that the strong connectivity assumption in Theorem~\ref{thm:sc-balance} is essential.

% We have,

% \[
% L(\Phi)=
% \begin{pmatrix}
% 1 & -1 & 0 & 0\\
% -1 & 1 & 0 & 0\\
% -1 & 0 & 2 & -1\\
% 0 & 0 & 1 & 1
% \end{pmatrix}.
% \]
% Since
% \[
% L(\Phi)=
% \begin{pmatrix}
% B_1 & 0\\
% * & B_2
% \end{pmatrix},
% \]
% where
% \[
% B_1=
% \begin{pmatrix}
% 1&-1\\
% -1&1
% \end{pmatrix},
% \qquad
% B_2=
% \begin{pmatrix}
% 2&-1\\
% 1&1
% \end{pmatrix},
% \]
% we have $\det L(\Phi)=\det(B_1)\det(B_2)=0.$ Therefore, 

\end{example}

An analogous characterization for the signless Laplacian follows by replacing the gains with their negatives.

\begin{corollary}\label{cor:signless-balance}
Let $\Phi=(D,\varphi)$ be a $\mathbb{T}$-gain digraph whose underlying digraph $D$ is strongly connected. Then $0\in\spec Q(\Phi)$ if and only if $\Phi \text{ is cycle antibalanced}.$ Moreover, when these conditions hold, $Q(\Phi)$ is cospectral with $L(D)$.
\end{corollary}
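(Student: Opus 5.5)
The plan is to reduce the corollary to Theorem~\ref{thm:sc-balance} by passing to the negated gain function. Let $\Phi^- = (D,-\varphi)$. Since every nonzero entry of $A(\Phi)$ is negated, we have $A(\Phi^-) = -A(\Phi)$. Hence
\[
Q(\Phi) = D^{+} + A(\Phi) = D^{+} - A(\Phi^-) = L(\Phi^-).
\]
In particular, $\spec Q(\Phi) = \spec L(\Phi^-)$ as multisets. The underlying digraph of $\Phi^-$ is the same strongly connected $D$, so Theorem~\ref{thm:sc-balance} applies to $\Phi^-$.

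For the first assertion, I would argue the chain
\[
0\in\spec Q(\Phi) \iff 0\in\spec L(\Phi^-) \iff \Phi^- \text{ is cycle balanced}.
\]
The first equivalence is the identity $Q(\Phi)=L(\Phi^-)$, and the second is the equivalence (iv)$\Leftrightarrow$(i) of Theorem~\ref{thm:sc-balance}. It remains to identify balance of $\Phi^-$ with antibalance of $\Phi$. For a directed cycle $C$ of length $\ell(C)$ we have $(-\varphi)(C) = (-1)^{\ell(C)}\varphi(C)$. So $(-\varphi)(C)=1$ holds exactly when $\varphi(C) = (-1)^{\ell(C)}$, which is the definition of cycle antibalance (as already remarked after that definition in Section~\ref{sec 2}).

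For the cospectrality assertion, assume these conditions hold, so $\Phi^-$ is cycle balanced. Then (i)$\Rightarrow$(iii) of Theorem~\ref{thm:sc-balance} gives that $L(\Phi^-)$ is cospectral with $L(D)$. Since $Q(\Phi)=L(\Phi^-)$, the matrix $Q(\Phi)$ is cospectral with $L(D)$. For a more transparent route I would instead use (ii): there is a diagonal unitary $S$ with $S A(\Phi^-) S^{-1} = A(D)$. Since $D^{+}$ is diagonal, it commutes with $S$, so $S Q(\Phi) S^{-1} = D^{+} - A(D) = L(D)$, which gives similarity and not merely cospectrality.

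I do not expect a real obstacle. The only point to check is that negating gains keeps them in $\TT$, so $\Phi^-$ is again a $\TT$-gain digraph on the same strongly connected $D$ and Theorem~\ref{thm:sc-balance} applies verbatim. The second point is that the diagonal out-degree matrix is unaffected by negating the gains, which is what makes $Q(\Phi)=L(\Phi^-)$ hold exactly.
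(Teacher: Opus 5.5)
Your proposal is correct and follows the paper's proof essentially verbatim. The paper also writes $Q(\Phi)=D^{+}-A(-\Phi)=L(-\Phi)$, applies Theorem~\ref{thm:sc-balance} to $-\Phi$, and identifies cycle balance of $-\Phi$ with cycle antibalance of $\Phi$. Your explicit check that $(-\varphi)(C)=(-1)^{\ell(C)}\varphi(C)$, and your remark that the switching matrix gives $SQ(\Phi)S^{-1}=L(D)$ (so the two are similar, not merely cospectral), are details the paper leaves implicit.
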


\begin{proof}
Since
\[
Q(\Phi)=D^{+}+A(\Phi)=D^{+}-A(-\Phi)=L(-\Phi),
\]
we have
\[
0\in\spec Q(\Phi)
\iff
0\in\spec L(-\Phi).
\]
By Theorem~\ref{thm:sc-balance}, this is equivalent to $-\Phi$ being cycle balanced, that is, to $\Phi$ being cycle antibalanced. Furthermore, if $\Phi$ is cycle antibalanced then $Q(\Phi)$ is cospectral with $L(D)$.
\end{proof}

We recall the following form of the Ger\v{s}gorin circle theorem, which will be used to locate the spectrum of the Laplacian matrix.

\begin{theorem} {\cite[p.~388]{HornJohnson}}\label{thm:gersgorin}
Let $M=[m_{ij}]\in\mathbb{C}^{n\times n}$, and for each $i$ let $R_i=\sum_{j\neq i}|m_{ij}|$ denote its $i$-th
deleted absolute row sum. Then every eigenvalue of $M$ lies in 
\(
\bigl\{\,z\in \mathbb{C}:\ |z-m_{ii}|\le R_i\,\bigr\} 
\) for some $i$.
\end{theorem}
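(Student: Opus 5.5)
The statement just recorded is the Geršgorin circle theorem, a classical result cited from Horn and Johnson. The paper takes it as known, but the argument is short and I would give it directly from an eigenvector.

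Let $\lambda$ be an eigenvalue of $M$ and $x\neq 0$ an eigenvector, so $Mx=\lambda x$. Choose an index $i$ with $|x_i|=\max_k|x_k|>0$; this is the same maximum-modulus device already used in the proof of Theorem~\ref{them:upp} and in Theorem~\ref{thm:sc-balance}. Reading off the $i$-th row of $Mx=\lambda x$ and moving the diagonal term to the left gives
\[
(\lambda-m_{ii})x_i=\sum_{j\neq i} m_{ij}x_j .
\]

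Next take absolute values and apply the triangle inequality, then bound each $|x_j|$ by $|x_i|$:
\[
|\lambda-m_{ii}|\,|x_i|\le \sum_{j\neq i}|m_{ij}|\,|x_j|\le R_i|x_i| .
\]
Dividing by $|x_i|>0$ yields $|\lambda-m_{ii}|\le R_i$. Hence $\lambda$ lies in the $i$-th disc, for this particular $i$, which is exactly the claim.

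There is no real obstacle. The only point needing care is choosing $i$ so that $x_i\neq 0$ and $x_i$ dominates the other coordinates. The maximum-modulus choice achieves both, since $x\neq 0$. The index $i$ depends on $\lambda$, which is why the conclusion is stated as \emph{for some} $i$.
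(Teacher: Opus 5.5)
Your proof is correct and is the standard one. The paper gives no proof of its own and simply cites Horn and Johnson, where Ger\v{s}gorin's theorem is proved by exactly this argument: take a maximum-modulus coordinate of an eigenvector and apply the triangle inequality to that row. It is the same device the paper later reuses in Theorems~\ref{them:upp} and~\ref{thm:sc-balance}.
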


Applying the Ger\v{s}gorin theorem to the two Laplacian matrices yields the following spectral localization result.
 
\begin{theorem}\label{thm:LQ-rhp}
Let $\Phi = (D, \varphi)$ be a $\mathbb{T}$-gain digraph. Then every eigenvalue of $L(\Phi)$ and every eigenvalue of $Q(\Phi)$ has nonnegative real part.
\end{theorem}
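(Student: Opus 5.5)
The plan is to apply the Ger\v{s}gorin circle theorem (Theorem~\ref{thm:gersgorin}) row by row to $L(\Phi)$ and to $Q(\Phi)$, and then check that every resulting disc lies in the closed right half-plane.

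\textbf{Disc data.} First I compute the Ger\v{s}gorin data. Since $D$ is loopless, $A(\Phi)_{ii}=0$, so the diagonal entries of $L(\Phi)=D^{+}-A(\Phi)$ and of $Q(\Phi)=D^{+}+A(\Phi)$ are both $\deg^{+}(v_i)$. For $j\neq i$, the off-diagonal entry in row $i$ is $\mp\varphi(v_i,v_j)$ when $(v_i,v_j)\in\mathcal{A}$ and $0$ otherwise. Every gain lies in $\TT$, so each such entry has modulus $1$. Consequently, for both matrices the $i$th deleted absolute row sum is
\[
R_i=\#\{j:(v_i,v_j)\in\mathcal{A}\}=\deg^{+}(v_i).
\]
This uses the standing assumption that there is at most one arc from $v_i$ to $v_j$.

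\textbf{Localization.} By Theorem~\ref{thm:gersgorin}, every eigenvalue $z$ of $L(\Phi)$ or $Q(\Phi)$ satisfies $|z-\deg^{+}(v_i)|\le \deg^{+}(v_i)$ for some $i$. Write $d=\deg^{+}(v_i)\ge 0$. Then
\[
d-\operatorname{Re} z\le |\operatorname{Re}(z-d)|\le |z-d|\le d,
\]
which gives $\operatorname{Re} z\ge 0$. If $d=0$, the disc collapses to the point $\{0\}$, which is still in the closed right half-plane, so that case needs no separate argument. Geometrically, each disc is centred at $d$ on the nonnegative real axis with radius exactly $d$, so it touches the imaginary axis only at $0$.

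I do not expect a genuine obstacle. The only points needing care are that the diagonal of $A(\Phi)$ vanishes (looplessness), so the disc centres are exactly the out-degrees, and that the radii use out-degrees rather than in-degrees. That is why row sums are the right choice: $D^{+}$ is built from out-degrees and row $i$ of $A(\Phi)$ records the arcs leaving $v_i$. As a side remark, the same argument shows that $0$ is the only possible purely imaginary eigenvalue, though that is not needed for the statement.
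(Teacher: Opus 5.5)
Your proposal is correct and matches the paper's proof: the paper also applies Ger\v{s}gorin's theorem row by row to $L(\Phi)$ and $Q(\Phi)$, observes that the diagonal entries and the deleted absolute row sums both equal $\deg^{+}(v_i)$, and concludes that every disc $\{z : |z-\deg^{+}(v_i)|\le \deg^{+}(v_i)\}$ lies in the closed right half-plane. Your inequality $d-\operatorname{Re} z\le |z-d|\le d$ simply writes out the containment that the paper asserts geometrically.
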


\begin{proof}
For $L(\Phi) = D^{+} - A(\Phi)$ and $Q(\Phi) = D^{+} + A(\Phi)$ the diagonal entries are $ \deg^{+}(v_i)$, and since every
gain has modulus one, the $i$th deleted absolute row sum for both the matrices is given by
\[
  R_i = \sum_{(v_i,v_j)\in \mathcal{A}} \bigl| \varphi(v_i,v_j) \bigr|
      = \sum_{(v_i,v_j)\in \mathcal{A}} 1 = \deg^{+}(v_i).
\]
From Theorem~\ref{thm:gersgorin}, each eigenvalue of $L(\Phi)$ and $Q(\Phi)$ lies in 
\(
  \bigl\{\, z \in \mathbb{C} : |z - \deg^{+}(v_i)| \le \deg^{+}(v_i) \,\bigr\},
\)
a disc centred on real axis at the nonnegative real number $\deg^{+}(v_i)$ with radius $\deg^{+}(v_i)$. Every such disc is contained in the closed right half-plane and hence every eigenvalue has nonnegative real part.
\end{proof}

Since each eigenvalue of $L(\Phi)$ and $Q(\Phi)$ lies in 
\(
  \bigl\{\, z \in \mathbb{C} : |z - \deg^{+}(v_i)| \le \deg^{+}(v_i) \,\bigr\},
\) it follows that if the underlying digraph $D$ is strongly connected, 
then from Theorem \ref{thm:sc-balance} the only possible eigenvalue of $L(\Phi)$ on the imaginary axis is $0$, which occurs if and only if $\Phi$ is cycle balanced. Likewise, by Corollary \ref{cor:signless-balance} the only possible eigenvalue of $Q(\Phi)$ on the imaginary axis is $0$, occurring if and only if $\Phi$ is antibalanced. Consequently, $L(\Phi)$ is positive stable (real part of its eigenvalues is strictly positive), whenever $\Phi$ is not cycle balanced, and $Q(\Phi)$ is positive stable whenever $\Phi$ is not antibalanced.

\section{Conclusion and Future Scope} \label{sec6}

In this paper, we studied the balance and spectral properties of non-Hermitian matrices associated with $\TT$-gain digraphs. We established bounds on the spectral radius of the adjacency matrix and characterized the cases of equality. We also determined the spectra of unicyclic digraphs.

Most of our characterizations 
%of balance and the equality case 
for the spectral radius require the underlying digraph to be strongly connected, it would therefore be interesting to identify classes of weakly connected digraphs for which analogous results remain valid. We have also restricted our attention to digraphs containing a single directed cycle. A natural continuation is the study of general digraphs whose underlying graph contains more than one cycles.
%that are not directed cycles.
Other directions include determining when the spectrum uniquely determines a gain digraph and minimizing the spectral radius over all gain assignments on a fixed digraph. 

\section{Acknowledgment}
The first author expresses gratitude for the Junior Research Fellowship provided by BITS Pilani. The third author also extends thanks for the support received from NFSG N4/24/1010, granted by BITS Pilani.

\bibliographystyle{plain}
\bibliography{references}

\end{document}